\documentclass[11pt,hyp,]{nyjm}

\usepackage{hyperref}
\hypersetup{nesting=true,debug=true,naturalnames=true}
\usepackage{graphicx,amssymb,upref}

\hyphenation{Lem-ma}
\hyphenation{Toe-plitz}
\hyphenation{co-u-nit}

\let\<\langle
\let\>\rangle
\usepackage[all,pdf]{xy}
\UseComputerModernTips

\let\uml\"


\title[An elliptic sequence is not a sampled
linear sequence]{An elliptic sequence is not a sampled linear recurrence sequence}

\author{F.~Luca}
\address{School of Mathematics\\
University of the Witwatersrand\\
Private Bag 3, Wits 2050, South Africa}
\email{Florian.Luca@wits.ac.za}
\author{T.~Ward}
\address{Ziff Building\\
University of Leeds\\
Leeds LS2 9JT, UK}
\email{t.b.ward@leeds.ac.uk}


\keywords{elliptic divisibility sequence; non-torsion point; linear recurrence sequence}

\subjclass[2010]{11B37; 11G05}


\renewcommand\ge{\geqslant}
\renewcommand\le{\leqslant}
\def\seqn#1{(#1)_{n\ge1}}
\def\seq#1{(#1)}

\newtheorem{theorem}{Theorem}
\newtheorem{lemma}[theorem]{Lemma}

\def\eul{{\rm{e}}}
\def\imag{{\rm{i}}}
\def\C{\mathbb C}
\def\F{\mathbb F}

\def\K{\mathbb K}
\def\Q{\mathbb Q}
\def\Z{\mathbb Z}

\begin{document}

\begin{abstract}
Let~$E$ be an elliptic curve defined over the rationals
and in minimal Weierstrass form, and let~$P=(x_1/z_1^2,y_1/z_1^3)$
be a rational point of infinite order on~$E$, where~$x_1,y_1,z_1$
are coprime integers. We show that the integer
sequence~$\seqn{z_n}$ defined by~$nP=(x_n/z_n^2,y_n/z_n^3)$ for all~$n\ge 1$
does not eventually coincide with~$\seqn{u_{n^2}}$
for any choice of linear recurrence sequence~$\seqn{u_n}$ with integer values.
\end{abstract}
\maketitle
\tableofcontents

\section{Introduction}

Let~$E$ be an elliptic curve defined over~${\mathbb Q}$,
given by an equation of the form
\begin{equation}\label{eq:1}
y^2=x^3+Ax+B
\end{equation}
with~$A,B\in\Z$ and
with discriminant~$\Delta_E=4A^3+27B^2\ne 0$,
and write all affine points in the form~$(x/z^2,y/z^3)$ with~$\gcd(x,y,z)=1$ and~$z>0$.
Let~$P=(x_1/z_1^2,y_1/z_1^3)\in E(\mathbb{Q})$
have infinite order
and associate to~$P$ the integer sequence~$\seqn{z_n}$
where~$nP=(x_n/z_n^2,y_n/z_n^3)$ for all~$n\ge 1$.
We will refer to such sequences as being
elliptic divisibility sequences
(there are several different definitions and we
will only be cavalier about the distinction where it
does not matter).
It is known that such a sequence
has a characteristic quadratic-exponential growth rate,~$\log z_n=(c+o(1))n^2$
as~$n\to\infty$
(see~\cite[Sec.~10.4]{MR1990179} for a discussion of the
relation between sequences of this form and elliptic
divisibility sequences defined via a bilinear recurrence or
the sequence of division polynomials of the curve,
and for references to some of the basic
facts about linear and bilinear recurrence sequences including
the growth rate). The constant~$c$ is the canonical height of the
point~$P$ on the curve~$E$.

On the other hand, an integer sequence~$\seqn{u_n}$
is said to be a linear recurrence sequence
of order~$k\ge1$ if there are constants~$c_1,\dots,c_k$
with~$c_k$ non-zero satisfying
\begin{equation}\label{equation:LRSoforderk}
u_{n+k}=c_1u_{n+k-1}+\cdots+c_ku_n
\end{equation}
for all~$n\ge1$, and~$k$ is minimal with this property.
By Fatou's lemma~\cite[p.~369]{MR1555035}
we may assume that~$c_1,\ldots,c_k$ are also integers.
It is known that such a sequence (under a non-degeneracy
hypothesis detailed later) has a characteristic linear-exponential
growth rate: for any~$\epsilon>0$ there is some~$N=N(\epsilon,\seq{u_n})$
and constants~$A,C>0$ with~$C^{(1-\epsilon)n}\le\vert u_n\vert
\le AC^nn^k$ for all~$n\ge N$ (see Evertse~\cite{MR766298}
or van der Poorten and Schlickewei~\cite{MR93d:11036}).
The deep part of this statement
is to control possible cancellation between dominant
characteristic roots of equal size. We will not use this
result here, but instead will deal directly
with the possible multiplicity of dominant roots.
Here the characteristic
growth parameter~$C$ is the maximum of the set of absolute
values of zeros of the associated characteristic polynomial.

It also makes sense to ask
questions about arithmetic properties. For example:
\begin{itemize}
\item Does the sequence have a `Zsigmondy bound', meaning that eventually
each term of the sequence has a prime divisor that does
not divide any earlier term? Silverman~\cite[Lemma~9]{MR961918} has shown that
an elliptic divisibility sequence always has this property,
and this will be used below.
Some linear recurrence sequences do have this property
(including, in particular, all Lucas and Lehmer sequences)
and some do not.
\item Does the sequence count periodic points for some
map? Here it is known that some -- but far from all -- linear
recurrence sequences do, while Silverman and Stephens~\cite{MR2226354}
show that no elliptic divisibility sequence does.
\end{itemize}

In light of the growth rate observations particularly, it is natural to ask if
an elliptic divisibility sequence is simply a linear
recurrence sequence in disguise, obtained by sampling
the linear recurrence sequence at the squares. Our purpose here
is to show that this is not the case in the following
robust sense. Let us say that sequences~$\seqn{a_n}$
and~$\seqn{b_n}$ are \emph{eventually equal}, written~$\seqn{a_n}=_e\seqn{b_n}$,
if there is some~$N=N(\seq{a_n},\seq{b_n})$ with~$a_n=b_n$ for all~$n\ge N$.

\begin{theorem}\label{thm:1}
Let~$E:y^2=x^3+Ax+B$ be an elliptic
curve defined over the rationals,
let~$P=(x_1/z_1^2,y_1/z_1^3)\in E(\Q)$ be a point
of infinite order, and let~$\seqn{z_n}$
be a sequence of integers satisfying~$nP=(x_n/z_n^2,y_n/z_n^3)$ (the sign of $z_n$ can be chosen arbitrarily).
Then no integer linear recurrence sequence~$\seqn{u_n}$ has the property
that
\[
\seqn{z_{n^{\vphantom{2}}}}=_e\seqn{u_{n^2}}.
\]
\end{theorem}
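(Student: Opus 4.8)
The plan is to argue by contradiction. Suppose there were an integer linear recurrence sequence~$\seqn{u_n}$, say of order~$k$, with characteristic roots~$\alpha_1,\dots,\alpha_s$ and general term~$u_m=\sum_{j=1}^{s}p_j(m)\alpha_j^m$ for suitable polynomials~$p_j$, such that~$z_n=u_{n^2}$ for all~$n\ge N$. I want to trap this between the positivity and quadratic--exponential growth of~$\seqn{z_n}$ on one side and Silverman's primitive divisor theorem on the other. First I would use the growth rates to pin down the relevant size. The trivial upper bound~$\vert u_m\vert\le B\,m^{k}C^{m}$, where~$C=\max_j\vert\alpha_j\vert$, combined with~$\log z_n=(c+o(1))n^2$ and~$c>0$, forces~$\log C\ge c>0$; in particular~$C>1$, so the recurrence genuinely has dominant characteristic roots, and the whole question is about the behaviour of the dominant part along the squares.

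The core step is to analyse~$z_n=u_{n^2}=\sum_j p_j(n^2)\alpha_j^{n^2}$ with the roots grouped by absolute value. For the roots~$\alpha_j=Me^{\imag\theta_j}$ of a fixed modulus~$M$, the corresponding block equals~$M^{n^2}$ times a term whose leading part is a polynomial in~$n^2$ against the trigonometric sum~$g(n)=\sum\ell_j e^{\imag n^2\theta_j}$. Here I invoke Weyl's equidistribution theorem: the sequence~$n\mapsto(n^2\theta_j)_j$ is equidistributed on a subtorus, and on that subtorus the associated continuous real function has mean zero unless it vanishes identically. Hence, for each modulus, either the block contributes~$o(M^{n^2})$ along the squares and may be discarded, or its contribution is a nonzero real almost-periodic sequence of mean zero, which therefore takes values of both signs; for the largest surviving modulus this would make~$z_n$ negative for infinitely many~$n$, contradicting~$z_n\ge1$. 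Iterating this elimination, and then restricting~$n$ to an arithmetic progression that freezes every root of unity occurring among the ratios~$\alpha_j/\alpha_{j'}$ of the surviving roots, I would reduce to the situation where, along a progression~$n\equiv a\pmod b$, one has~$z_n=\beta\,n^{2d}(C^{*})^{n^2}(1+o(1))$ for a real~$\beta>0$, an effective modulus~$C^{*}\le C$ equal to the absolute value of an algebraic integer, and an integer~$d\ge0$; comparing with~$\log z_n=(c+o(1))n^2$ forces~$\log C^{*}=c$, while~$\beta$ is automatically algebraic, being a~$\Q$-linear combination of the ``periods'' of~$\seqn{u_n}$. I expect the main obstacle to be exactly this bookkeeping with dominant roots of equal modulus --- the passage to a subprogression and the verification that the elimination terminates (which it does, since there are only finitely many pairs (modulus, degree) in the characteristic data). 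This is the ``multiplicity of dominant roots'' difficulty flagged in the introduction.

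Finally I would bring in Silverman's primitive divisor theorem to close. For large~$n$ in the progression,~$z_n$ has a primitive prime divisor~$p=p_n$, whose rank of apparition in the elliptic divisibility sequence is exactly~$n$, so that~$p\mid z_\ell$ if and only if~$n\mid\ell$; shrinking the progression further we may assume~$p\nmid c_k$, so that~$\seqn{u_m\bmod p}$ is purely periodic, with period~$\pi_n$, say. From~$u_{j^2}\equiv u_{n^2}\equiv0\pmod p$ whenever~$j^2\equiv n^2\pmod{\pi_n}$, together with the fact that~$p\mid z_j$ only when~$n\mid j$, a short valuation (prime-by-prime) argument forces~$n\mid\pi_n$. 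On the other hand~$\pi_n$ divides the order of the companion matrix in~$\mathrm{GL}_k(\F_p)$, which divides~$p^{k(k-1)/2}\prod_{i\le k}(p^i-1)$; taking~$n$ prime (and using~$p_n\ge n$, so~$p_n\ne n$ for large~$n$) this would force~$\mathrm{ord}_n(p_n)\le k$, i.e.\ the primitive prime~$p_n$ lies in one of at most~$k!$ residue classes modulo~$n$. Ruling this out for infinitely many primes~$n$ in the progression --- via the fact that the primitive primes of an elliptic divisibility sequence cannot be confined to so few residue classes modulo~$n$ for all large~$n$ --- yields the desired contradiction. Controlling~$p_n$ modulo~$n$ in this last step is the second point that will require care.
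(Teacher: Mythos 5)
Your overall strategy (growth rate, then analysis of dominant roots, then primitive divisors and periods mod $p$) gestures at both of the paper's proofs, but two of its load-bearing steps do not hold. First, the Weyl-equidistribution dichotomy is false as stated: the mean of the dominant block over the closure of the orbit $\{(n^2\theta_j)_j\}$ is not zero in general, because every character that is trivial on that closure (in particular a dominant \emph{positive real} root) contributes its coefficient to the mean. Concretely, for the integer recurrence with characteristic roots $5$ and $3\pm 4\imag$ and $u_m=10\cdot 5^m+(3+4\imag)^m+(3-4\imag)^m$, the dominant block along squares is $5^{n^2}\bigl(10+2\cos(n^2\theta)\bigr)$ with $\theta/\pi$ irrational: it is never $o(5^{n^2})$, never changes sign, and is not asymptotic to $\beta n^{2d}(C^*)^{n^2}$ for any constant $\beta$, even along an arithmetic progression (progressions only freeze root-of-unity ratios, not irrational angles). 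So your elimination does not terminate in the claimed clean asymptotic, and positivity of $z_n$ yields no contradiction; this is precisely the equal-modulus difficulty the paper flags, and it is resolved there not by sign considerations but by substituting the Binet form into the elliptic recurrence $z_{2n+1}=z_{n+2}z_n^3-z_{n-1}z_{n+1}^3$ and invoking Schmidt's subspace theorem / $S$-unit finiteness, with a convex-hull (Perron-type) argument on the arguments $\theta_i$. (Relatedly, your appeal to $z_n\ge 1$ only addresses one sign choice, whereas the theorem permits arbitrary signs.)

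Second, your endgame rests on the assertion that the primitive prime $p_n$ of $z_n$ cannot have multiplicative order at most $k$ modulo $n$ (equivalently, cannot be confined to $O_k(1)$ residue classes mod $n$) for all large prime $n$. The deduction $n\mid \pi_n$ is fine, but this final assertion is unproved and is not a known fact: Silverman's theorem gives existence of a primitive divisor, and the only structural control on it is $p_n\equiv a_{p_n}-1\pmod n$; no available equidistribution result constrains these specific, enormous primes modulo a \emph{varying} modulus $n$. In effect you have reduced the theorem to a statement at least as hard as the theorem itself. The paper's arithmetic proof inverts the quantifiers to make this tractable: it fixes a single large auxiliary prime $q$ outside a finite bad set, uses Serre's open image theorem together with Bachmakov/Ribet and the Meleleo--Pappalardi (David--Wu) density theorem to produce a positive density of primes $p$ with $p\equiv 2$, $a_p\equiv 3\pmod q$ and $q\mid\mathrm{ord}_{E(\F_p)}(P)$, and then shows (Lemma~\ref{lemma:deducedcongruences}, via the Vandermonde-type determinant argument) that the period of $\seqn{u_{n^2}}$ modulo such a $p$ is prime to $q$, contradicting $q\mid T$. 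That fixed-modulus density statement is provable with current technology; your varying-modulus statement is not, so the proposal has a genuine gap at both the dominant-root reduction and the closing step.
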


For any such sequence ~$\seqn{z_n}$ there is some~$\ell\ge1$
with the property that~$\seqn{z_{\ell n}}$ is, up to signs, an
elliptic divisibility sequence in the recurrence sense, meaning that it
satisfies the
non-linear recurrence
defined by specifying
four initial integral values~$w_1,w_2,w_3,w_4$ with~$w_1w_2w_3\neq0$
and with~$w_2\vert w_4$,
and satisfies
\begin{equation}\label{edsrecurrence1}
w_{2n+1}^{\vphantom{3}}w_1^3=w_{n+2}^{\vphantom{3}}w_n^3-w_{n+1}^3w_{n-1}^{\vphantom{3}}
\end{equation}
for~$n\ge2$ and
\begin{equation}\label{edsrecurrence2}
w_{2n}^{\vphantom{3}}w_2^{\vphantom{3}}w_1^2=w_{n+2}^{\vphantom{3}}w_n^{\vphantom{3}}w_{n-1}^2
-
w_n^{\vphantom{3}}w_{n-2}^{\vphantom{3}}w_{n+1}^2.
\end{equation}
Since the property of being a linear recurrence sequence is
preserved under the operation taking~$\seqn{u_n}$ to~$\seqn{u_{\ell^2n}}$,
it is therefore enough to show that Theorem~\ref{thm:1}
holds for elliptic divisibility sequences defined either
geometrically using a non-torsion point on an elliptic
curve or using the non-linear recurrence relation.

For a restricted class of linear recurrence sequences
we can already deduce Theorem~\ref{thm:1} from the work
of Silverman and Stephens~\cite{MR2226354},
by the following argument. Moss~\cite[Th.~2.2.2]{moss} has
given a combinatorial proof that
if~$\seqn{u_n}$ counts the periodic points for some
map, then so does~$\seqn{u_{n^k}}$ for any~$k\ge1$.
It follows that (for example) sampling a Lehmer--Pierce
sequence along the squares never produces an elliptic
divisibility sequence.

We shall give two proofs of Theorem~\ref{thm:1}, a
complex (Diophantine) one, which works only when the signs of $z_n$ are chosen in a specific way, and a~$p$-adic
(arithmetic) one which works for any choice of signs.

\section{A Diophantine proof of a special case of the main theorem}

This particular proof works when $(z_n)_{n\ge 1}$ is an elliptic divisibility sequence. We make this assumption throughout this section.  
We start with a linear recurrence sequence~$\seqn{u_n}$ of some order~$k\ge1$,
assume the relation
\begin{equation}\label{eqn:eventualequivalence}
\seqn{z_{n^{\vphantom{2}}}}=_e\seqn{u_{n^2}},
\end{equation}
deduce certain
properties the linear recurrence sequence must have, and finally
argue that the hypothesis leads to a contradiction. If~$k=1$
then~$\seqn{u_n}$ is either constant or a geometric progression
and so in particular the largest prime factor of~$u_n$ is bounded. On the
other hand, as mentioned above, Silverman~\cite[Lemma~9]{MR961918}
has
shown that all but finitely many terms of~$\seqn{z_n}$
have a primitive prime divisor (that~$E$ is really an elliptic
curve -- it has non-vanishing discriminant -- is used here),
and so the largest prime
divisor of~$\seqn{z_n}$, and hence of~$\seqn{u_n}$, cannot be bounded.
It follows that~$k>1$.

Assume therefore that~$\seqn{u_n}$ has order~$k\ge2$ and satisfies~\eqref{equation:LRSoforderk};
write
\[
\Psi(x)=x^k-c_1x^{k-1}-\cdots-c_k=\prod_{i=1}^{s} (x-\alpha_i)^{\sigma_i}
\]
where~$\alpha_1,\ldots,\alpha_s\in\C$ are distinct roots
with multiplicity~$\sigma_1,\ldots,\sigma_s$ respectively.
As usual we may then write the terms of the sequence as
a generalized power sum
\begin{equation}
\label{eq:Binet}
u_n=\sum_{i=1}^r P_i(n) \alpha_i^n,
\end{equation}
for all~$n\ge1$, where the polynomials~$P_i(X)\in\Q(\alpha_1,\ldots,\alpha_s)[X]$
have degree~$(\sigma_i-1)$ for~$i=1,\ldots,s$
(the claim on the degrees being a consequence of the assumed minimality
of~$k$; taking the form of~\eqref{eq:Binet} in fact
characterizes being a linear recurrence sequence of order no more than~$k$,
which implies useful consequences like~$\seqn{u_{mn}}$ being a linear
recurrence of order no more than~$k$ for any~$m\ge1$ if~$\seqn{u_n}$ is
a linear recurrence of order~$k$, for instance).

We next claim that -- for the purposes of proving Theorem~\ref{thm:1} --
we may assume that~$\seqn{u_n}$ is non-degenerate. This is a standard
reduction argument in the study of linear recurrence sequences,
which we outline briefly. A linear
recurrence sequence of order~$k$ written as~\eqref{eq:Binet}
is said to be degenerate if for some pair~$1\le i\neq j\le s$
the quotient~$\alpha_i/\alpha_j$ is a root of unity, and non-degenerate
if not. Since the group of roots of unity in~$\K=\Q(\alpha_1,\dots,\alpha_s)$
is a finite cyclic group, there is some~$M$ with the property
that if a product~$\zeta=\alpha_1^{m_1}\cdots\alpha_s^{m_s}$
is a root of unity, then~$\zeta^M=1$.
Thus we may replace the sequence~$\seqn{u_n}$ with~$\seqn{u_{M^2n}}$,
which is clearly a linear recurrence sequence of order no more than~$k$ by~\eqref{eq:Binet},
and the relation~\eqref{eqn:eventualequivalence}
implies that~$\seqn{z_{Mn}}=_e\seqn{u_{M^2n}}$,
which is the same relation but with the point~$P$ replaced with~$MP$.
Here we are taking advantage of the geometric description of the
elliptic divisibility sequence.
Thus it is sufficient to show Theorem~\ref{thm:1} for non-degenerate
linear recurrence sequences of order~$k\ge2$.
By rescaling once again (which will not affect the
non-degeneracy), we
may also assume that the elliptic divisibility sequence satisfies the non-linear
recurrence~\eqref{edsrecurrence1}--\eqref{edsrecurrence2}.

Re-label the
zeroes of~$\Psi$ so that
\[
\vert\alpha_j\vert=\rho=\max\{|\alpha_i|\mid1\le i\le s\}>1
\]
for~$j=1,\dots,r$
and~$\vert\alpha_j\vert\le \rho^{1-\delta}$ for~$j=r+1,\ldots,s$ for some~$\delta>0$ (that~$\rho>1$ follows from~\eqref{eqn:eventualequivalence} and the fact that the sequence~$\seqn{z_n}$
grows like~$c^{n^2}$ for some~$c>1$,
since the canonical height of a non-torsion point
is positive).
So we may write~$\alpha_j=\rho{\eul}^{{\imag}\theta_j}$ with~$\theta_j\in (-\pi,\pi]$
for~$j=1,\ldots,r$, and
\[
u_n
=
\sum_{i=1}^r P_i(n)\alpha_i^{n}+O\left(n^D \rho^{n(1-\delta)}\right),
\]
where~$D:=\max\{\sigma_i: 1\le i\le s\}.$

From~\eqref{edsrecurrence1} and~\eqref{edsrecurrence2}
we have
\[
z_{2n+1}^{\vphantom{3}}=z_{n+2}^{\vphantom{3}}z_{n}^3-z_{n-1}^{\vphantom{3}}z_{n+1}^3
\]
for all~$n\ge 1$. Using~\eqref{eqn:eventualequivalence}, we
deduce that
\begin{align}
\sum_{i=1}^{r}  P_i((2n+1)^2) \alpha_i^{(2n+1)^2}
&=
\left(\vphantom{\sum}\right.\!\!\sum_{i=1}^{r} P_i((n+2)^2) \alpha_i^{(n+2)^2}\!\!\left.\vphantom{\sum}\right)
\negmedspace\times\negmedspace
\left(\vphantom{\sum}\right.\!\!\sum_{i=1}^r P_i(n^2)\alpha_i^{n^2}\!\!\left.\vphantom{\sum}\right)^3\nonumber\\
&\hspace{-2cm}-
\left(\vphantom{\sum}\right.\!\!\sum_{i=1}^r P_i((n-1)^2)\alpha_i^{(n-1)^2}\!\!\left.\vphantom{\sum}\right)
\negmedspace\times\negmedspace
\left(\vphantom{\sum}\right.\!\!\sum_{i=1}^r P_i((n+1)^2) \alpha_i^{(n+1)^2}\!\!\left.\vphantom{\sum}\right)^3\nonumber\\
&+
O\!\!\left(\!n^{8D} \rho^{4n^2+4n-\delta n^2}\!\right)\label{eq:4}
\end{align}
for large~$n\ge1$. So, it makes sense to consider the expression
\begin{align*}
F(X,Z_1,\ldots,Z_r)
&:=
\sum_{i=1}^{r}  P_i((2X+1)^2) Z_i^{(2X+1)^2}\\
&\hspace{-1cm}-
\left(\sum_{i=1}^{r} P_i((X+2)^2) Z_i^{(X+2)^2}\right)
\left(\sum_{i=1}^r P_i(X^2)Z_i^{X^2}\right)^3\nonumber\\
&\hspace{-1cm}+\left(\sum_{i=1}^r P_i((X-1)^2)Z_i^{(X-1)^2}\right)
\left(\sum_{i=1}^r P_i((X+1)^2) Z_i^{(X+1)^2}\right)^3\nonumber\\
&\hspace{-1cm}=:\sum_{j=1}^{L} Q_j(X) M_j(X,Z_1,\ldots,Z_r),
\end{align*}
where the~$Q_j(X)$ are polynomials in the variable~$X$
of degree at most~$8D$, and for fixed positive integer~$X$
the expressions~$M_j(X,Z_1,\ldots,Z_r)$
are monomials in~$Z_1,\ldots,Z_r$ of
degree~$4X^2+4X+1$ or~$4X^2+4X+4$. Here,~$L=r+2r^4$.
Further, up to relabeling of the indices~$j\in \{1,\ldots,L\}$,
we may assume that
\[
(Q_i(X),M_i(X))  =  (P_i((2X+1)^2), Z_i^{4X^2+4X+1})
\]
for~$1\le i\le r$,
and that
\begin{align*}
(Q_i(X),M_i(X))
&=
\Bigl(P_i((X+2)^2) P_i^3(X^2)\\
&\qquad\qquad-
P_i((X-1)^2)P_i^3((X+1)^2), Z_i^{4X^2+4X+4}\Bigr)
\end{align*}
for~$r+1\le j\le 2r$. Note that~$M_j(X)$ involves at least two of the
indeterminates~$Y_1,\ldots,Y_r$ for~$j>2r$ and that~$M_j(X)$
has total degree~$4X^2+4X+4$ for all~$j>r$.
Since we need to specialize the expression~$F(X,Z_1,\ldots,Z_r)$ to~$X=n$
and~$(Z_1,\ldots,Z_r)=(\alpha_1,\ldots,\alpha_r)$,
where the components of this last~$r$-dimensional vector are
multiplicatively independent complex numbers with
the same absolute value~$\rho$,
we find it convenient to make the change of
variable~$Z_i:=Z\eul^{{\imag} Y_i}$,
and thus look at the expression
\begin{align*}
G(X,Z,Y_1,\ldots, Y_r)
&:=
F(X,Z\eul^{{\imag} Y_1},\ldots,Z\eul^{{\imag}Y_r})\\
&\hspace{-2.5cm}:=
\sum_{i=1}^{r}  P_i((2X+1)^2) (Z\eul^{{\imag} Y_i})^{(2X+1)^2}\\
&\hspace{-2.5cm}-
\left(\!\sum_{i=1}^{r} P_i((X+2)^2) (Z\eul^{{\imag} Y_i})^{(X+2)^2}\!\right)\!\!
\left(\!\sum_{i=1}^r P_i(X^2)(Z\eul^{{\imag} Y_i})^{X^2}\!\right)^{\!3}\nonumber\\
&\hspace{-2.5cm}+\left(\sum_{i=1}^r P_i((X-1)^2)(Z\eul^{{\imag} Y_i})^{(X-1)^2}\!\right)\!\!
\left(\!\sum_{i=1}^r P_i((X+1)^2) (Z\eul^{{\imag} Y_i})^{(X+1)^2}\!\right)^{\!3}\nonumber\\
&\hspace{-2.5cm}=Z^{4X^2+4X+1} \sum_{j=1}^{L_1} Q_j(X,Z)_{Y_1,\ldots,Y_r}\eul^{f_j(Y_1,\ldots,Y_r,X)},
\end{align*}
where~$L_1:=L-r$,
\[
Q_j(X,Z)_{Y_1,\ldots,Y_r}
=
\begin{cases}
(Q_j(X)-(Z\eul^{{\imag}Y_j})^3 Q_{j+r}(X))\eul^{{\imag}Y_j}, & 1\le j\le r;\\
Z^3Q_{j+r}(X) M_{j+r}(0,\eul^{{\imag}Y_1},\ldots,\eul^{{\imag}Y_r}), & r+1\le j\le L_1,
\end{cases}
\]
and
\[
\eul^{f_j(Y_1,\ldots,Y_r,X)}
=
\begin{cases}
\eul^{{\imag}Y_j(4X^2+4X)}, & 1\le j\le r;\\
\frac{M_{j+r}(X,\eul^{{\imag} Y_1},\ldots,\eul^{{\imag}Y_r})}{M_{j+r}(0,\eul^{{\imag} Y_1},\ldots,\eul^{{\imag}Y_r})}, & r+1\le j\le L_1.
\end{cases}
\]
Note that the expressions~$f_j(Y_1,\ldots,Y_r,X)$
are linear forms in~${\imag}Y_1,\ldots, {\imag}Y_r$ whose coefficients are
quadratic polynomials in~$X$. In fact,
\[
f_i(Y_1,\ldots,Y_r,X)={\imag} \sum_{j\in I_i} m_{i,j}(X) Y_j,
\]
where~$I_i\in \{1,\ldots,r\}$, ~$m_{i,j}(X)$ are quadratic polynomials in~$X$ with integer coefficients,~$m_{i,j}(0)=0$ for all~$1\le i\le L_1$ and~$j\in I_i$, and
\[
\sum_{j\in I_i} m_{i.j}(X)=4X^2+4
\]
for all~$1\le i\le L_1$.
For~$i=1,\ldots,r$, we have~$I_i=\{i\}$,
therefore
\[
m_{i,i}(X)=4X^2+4X,
\]
while for~$i>r$,~$I_i$ has at least two (and at most four) elements.
Now that we have fixed some notation, we return to~\eqref{eq:4}, put the dominant terms
on the left-hand side, the expression inside~$O$
on the right-hand side, and divide both sides by~$\rho^{4n^2+4n+1}$
obtaining (in our notation)
\begin{equation}\label{eq:t}
\rho^{-4n^2-4n-1} G(n,\rho,\theta_1,\ldots,\theta_r)=\sum_{i=1}^{L_1} x_i=O(\rho^{-\delta_1 n^2}),
\end{equation}
where~$\delta_1:=\delta/2$ and
\[
x_i=x_i(n)=Q_i(n,\rho)_{\theta_1,\ldots,\theta_r} \eul^{{\imag} f_i(n)}
\]
for~$1\le i\le L_1$, with
\[
f_i(n):=f_i(\theta_1,\ldots,\theta_r,n)
\]
for all~$i\in\{1,\ldots,L_1\}$.
Let us take a closer look at
\[
f_i(X)=\sum_{j\in I_i} m_j(X)\theta_j\in {\mathbb C}[x]
\]
for~$i\in \{1,\ldots,L\}$.
We claim that if two elements
in~$\{f_1,\dots,f_L\}$ are equivalent modulo the
equivalence relation
\[
f_{\ell_1}(X)\equiv_\pi f_{\ell_2}(X)
\Longleftrightarrow
\textstyle\frac{1}{\pi} (f_{\ell_1}(X)-f_{\ell_2}(X))\in {\mathbb Q}[X]
\]
then they are in fact equal. To see this,
notice that~$f_{\ell_1}(X)\equiv_\pi f_{\ell_2}(X)$ implies
that
\[
\eul^{{\imag}(f_{\ell_1}(n)-f_{\ell_2}(n))}
\]
is a monomial
in~$\alpha_1/\rho,\dots,\alpha_r/\rho$ and is a root of unity. In particular, for some positive integer~$A$ we have
\[
\eul^{A {\imag}( f_{\ell_1}(n)-f_{\ell_2}(n))}=1.
\]
This leads to
\[
\Bigl(\prod_{j\in I_{\ell_1}} \Bigl(\frac{\alpha_j}{\rho}\Bigr)^{ A m_{\ell_1,j}(n)}\Bigr)
\Bigl(\prod_{j\in I_{\ell_2}} \frac{\alpha_j}{\rho}\Bigr)^{-A m_{\ell_2,j}(n)}=1.
\]
Since~$\sum_{j\in I_{\ell}} m_{\ell,j}(n)$ is equal to~$4n^2+4n$,
this gives
\[
\prod_{j\in I_{\ell_1}} \alpha^{A m_{\ell_1,j}(n)} \prod_{j\in I_{\ell_2}} \alpha_j^{-A m_{\ell_2,j}(n)}=1,
\]
so~$I_{\ell_1}=I_{\ell_2}$ and~$m_{\ell_1,j}(n)=m_{\ell_2,j}(n)$ for~$j\in I_{\ell_1}$
and for all~$n$
since~$\alpha_1,\ldots,\alpha_r$ are multiplicatively independent,
and hence~$f_{\ell_1}(n)=f_{\ell_2}(n)$ for all~$n$.
It follows that~$f_{\ell_1}(X)=f_{\ell_2}(X)$.
In particular, we deduce that~$\eul^{{\imag}(f_{\ell_1}(n)-f_{\ell_2}(n))}$ is not a root of unity
for large~$n$
if~$f_{\ell_1}(X)\ne f_{\ell_2}(X)$.

The method of proof consists now in completing the following three steps:
\begin{itemize}
\item[(a)] For each~$i\in \{1,\ldots,L_1\}$ there is
some~$j\in\{1,\ldots,L_1\}$,~$j\ne i$, such that~$f_{j}=f_{i}$.
\item[(b)] If~$i\in \{1,\ldots,r\}$ and~$f_j=f_j$ for some~$j\ne i$, then~$j\ge r+1$.
\item[(c)] The final contradiction.
\end{itemize}
Let us look at the left-hand side of~\eqref{eq:t}. Assume first that
it is not identically zero as a function of~$n$.
Then the expression on the right-hand side of~\eqref{eq:t} is not
identically zero either, so~$L\ge 1$.
Moreover, the vector
\begin{equation}
\label{eq:vector}
\mathbf{x}(n)=(x_1(n),\ldots,x_L(n))
\end{equation}
satisfies
\begin{equation}
\label{eq:height}
H({\bf x})\ge \rho^{\kappa n^2}
\end{equation}
for some appropriate positive constant~$\kappa$, where~$H$ denotes
the na\"{\i}ve height. Indeed, this follows because
\[
x_j(n)=Q_j(n,\rho)_{\theta_1,\ldots,\theta_r}\eul^{f_j(n)}
\]
for~$j=1,\ldots,r$, where~$Q_j(n,\rho)_{\theta_1,\ldots,\theta_r}$
as given by~\eqref{eq:6} is non-zero
because~$P_j(X)$ is non-zero by Lemma \ref{lemma:leadcoefficientanddegree}.
Since from now on~$X=n$ is the only variable, we omit the dependence on~$\rho,\theta_1,\ldots,\theta_r$ when we refer to the polynomials~$Q_j(X,\rho)_{\theta_1,\ldots,\theta_r}$.
Indeed, if the degree of~$P_j(X)$ is~$d_j>0$, then the degree of~$Q_j(X,\rho)_{\theta_1,\ldots,\theta_r}$ is~$8d_j-3>0$, otherwise~$P_j(X)$ is a non-zero constant~$P_j(0)$, and
$Q_j(X)$ is the non-zero constant~$P_j(0)$. If~$r=1$, then~$Q_1(n,\rho)_{\theta_1}\eul^{f_1(n)}$ is the only term in the left-hand side of~\eqref{eq:t}, so~\eqref{eq:t} is impossible. Thus,~$r\ge 2$, so one of~$\theta_1$ and~$\theta_2$ is not in~${\mathbb Q}\pi$.
Thus, one of~$\eul^{{\imag} \theta_1}$ or~$\eul^{{\imag}\theta_2}$ is not a root of unity, which implies the inequality~\eqref{eq:height} by considering just the first two coordinates of~${\bf x}$, namely~$x_1(n)$ or~$x_2(n)$. We assume that~$n$ is large, in particular that it is outside the finite set of zeros of all the non-zero polynomials~$Q_1(X),\ldots,Q_L(X)$.
It is then an immediate consequence of
Schmidt's subspace theorem~\cite{MR0314761}
that the solutions~${\bf x}(n)$ of the form~\eqref{eq:vector}
to the inequality
\[
\sum_{i=1}^{L_1} x_i =O\Bigl(H({\bf x})^{-\delta_1/\kappa}\Bigr),
\]
which is implied by~\eqref{eq:t} via~\eqref{eq:height}, live in finitely many
subspaces of~${\overline{\mathbb Q}}^{\#{\mathcal D}}$.
That is, there exist finitely many non-zero
vectors~${\bf d}\in \{{\bf d}^{(1)},\ldots,{\bf d}^{(u)}\}\subset {\overline{\mathbb Q}}^{\#{\mathcal D}}$
with the property that
on writing~${\bf d}^{(\ell)}=(d_{i}^{(\ell)})_{1\le i\le L}$
we must have
that there exists
some~$\ell\in \{1,\ldots,u\}$ such that
\begin{equation}
\label{eq:Sunit}
\sum_{i=1}^{L_1} d_{i}^{(\ell)} x_i=0.
\end{equation}
All this was in the case when the left-hand side of~\eqref{eq:t} is non-zero.
If it is zero, we get the equation~\eqref{eq:Sunit} at once,
with~$d_i^{(\ell)}=1$ for~$1\le i\le L_1$. So it remains to look
at equations of the form~\eqref{eq:Sunit}. For each~$n$ satisfying~\eqref{eq:Sunit} the left-hand side can be non-degenerate or degenerate.
Here, non-degenerate means that the sum over any proper subset of~$\{1,\dots,L_1\}$
on the left-hand side of~\eqref{eq:Sunit}
does not sum to zero.
In any case an equation of the form~\eqref{eq:Sunit}
may be thought of as a sum with a bounded number of terms
of sums over disjoint subsets of~$\{1,\dots,L_1\}$
each of which comprises a non-degenerate equation.
That is, if~\eqref{eq:Sunit} is degenerate, then we may write
\[
\{1\le i\le L_1: d^{(\ell)}_i\ne 0\}=\bigcup_{j=1}^t \Gamma_j^{(\ell)},
\]
where the right-hand side is a partition into~$t\ge 2$
non-empty subsets~$\Gamma_j^{(\ell)}$ of the set on the left, and such that
\[
\sum_{i\in \Gamma_j^{(\ell)}} d_i^{(\ell)} x_i=0
\]
for~$1\le j\le t$,
where
each such equation is non-degenerate, meaning
that no proper subsum on the left is zero.
So we may assume without loss of generality that
equation~\eqref{eq:Sunit} is non-degenerate.
As in the proof of the finiteness of the number of
non-degenerate solutions to~${S}$-unit equation
(see Schlickewei~\cite{MR1069241} for example;
technically,~\eqref{eq:Sunit}
is not an~$S$-unit equation since in addition to the
elements~$\eul^{{\imag} f_i(n)}$ which belong to the multiplicative subgroup
of~${\mathbb C}^*$ generated by~$\{\alpha_1,\ldots,\alpha_r,\rho\}$, the elements~$x_i(n)$
also involve the polynomials~$Q_i(n)$ in~$n$, but their heights are of size~$n^{O(1)}=\eul^{o(n)}$, an amount which is negligible, so the argument goes through),
we are lead to the conclusion that for each
$\ell\in \{1,\ldots,u\}$ there exist~$i_1^{(\ell)}\ne i_2^{(\ell)}$ and a finite set
of complex numbers~${\mathcal D}^{(\ell)}_{i_1^{(\ell)},i_2^{(\ell)}}$ such that for each such~$n$, there is some~$\ell\in \{1,\ldots,u\}$
with
\[
\frac{x_{i_1}^{(\ell)}(n)}{x_{i_2}^{(\ell)}(n)}\in {\mathcal D}^{(\ell)}_{i_1^{(\ell)},j_1^{(\ell)},i_2^{(\ell)},j_2^{(\ell)}}.
\]
We omit the dependence on~$\ell$ for the rest of this argument
Hence,
\begin{equation*}
\frac{Q_{i_1}(n)}{Q_{i_2}(n)}\eul^{{\imag}(f_{j_1}(n)-f_{j_2}(n))}\in {\mathcal D}_{i_1,j_1,i_2,j_2}.
\end{equation*}
We thus get that
\begin{equation}
\label{eq:tt}
\eul^{{\imag}(f_{j_1}(n)-f_{j_2}(n))}\in {\mathcal D}_{i_1,j_1,i_2,j_2} \!\!\left(\frac{Q_{i_2}(n)}{Q_{i_1}(n)}\right)\!\!.
\end{equation}
If~$f_{j_1}(X)\ne f_{j_2}(X)$ then, by previous arguments,
for large~$n$ the number on the left-hand side of~\eqref{eq:tt}
above is not a root of unity
so its height is at least~$\eul^{\kappa_2 n}$ for some positive constant~$\kappa_2$, while the height of the number on the right-hand side of~\eqref{eq:tt} is~$n^{O(1)}$.
Thus,~\eqref{eq:t} cannot hold for large~$n$ unless~$f_{j_1}=f_{j_2}$.
This almost proves step~(a).
To complete the argument,
fix some~$j_1\in \{1,\ldots,L_1\}$
and apply the argument above to derive
an equation like~\eqref{eq:Sunit}. If it is non-degenerate and involves~$j_1$ (so~\eqref{eq:Sunit} holds for infinitely many~$n$ with some~$\ell$ such that~$d_{j_1}^{(\ell)}\ne 0$), we are done.
If not, we pick some~$j$ such that~$d_j^{(\ell)}\ne 0$, express~$x_j(n)$ linearly from~\eqref{eq:Sunit} as
\[
x_j(n)=-\sum_{j'\ne j} \left(d^{(\ell)}_{j'}/d^{(\ell)}_j\right) x_{j'}(n),
\]
and insert this into the left-hand side of~\eqref{eq:4}. Again we get a linear form
in the variables~${x_i(n)}_{1\le i\le L_1, i\ne j}$
(that is, in a smaller number of variables)
which involves~$x_{j_1}(n)$ and which is ``smaller'' to which we
may apply the same argument.
Eventually, after finitely many steps, we get to an equation like~\eqref{eq:Sunit} involving
our chosen~$j_1$ and some other indices with infinitely many non-degenerate solutions
in~$n$, showing that~$f_{j_1}=f_{j_2}$ for some~$j_2\ne j_1$, which proves step~(a).

Step~(b) is immediate.
Indeed, we have~$f_i={\imag}\theta_i (4X^2+4X)$ for~$i=1,\ldots,r$ and~$\theta_i\ne \theta_j$ for~$i\ne j$ in~$\{1,\ldots,r\}$ so~$f_i(X)=f_j(X)$ is impossible with distinct indices~$i,~j$ both in~$\{1,\ldots,r\}$.

For step~(c), for each~$i\in \{1,\ldots,r\}$, let~$j_i>r$ be such that~$f_i=f_{j_i}$. Matching leading coefficients in~$f_i(X)=f_{j_i}(X)$ (as polynomials in~$X$) and dividing across by~$4$, we get
\begin{equation*}
\label{eq:uuu}
\theta_i=\sum_{j\in I_i} (d_{i,j}/4) \theta_j.
\end{equation*}
Here,~$d_{i,j}$ is the leading coefficient of~$m_{i,j}(X)$. As noted above,~$d_{i,j}>0$
and
\[
\sum_{j\in I_i}d_{i,j}=4,
\]
so~$\theta_i$ is in the interior of the convex hull of
the set~$\{\theta_j\mid j\in I_i\}.$
If~$I_i$ has only two elements, one
of which is~$i$ itself, we deduce,
from~$I_i=\{i,j\}$, that~$(4-d_{i,i})\theta_i=d_{i,j} \theta_j$,
which is impossible since~$\alpha_i/\alpha_j$ is not a root of unity.
Thus, either~$I_i$ does not contain~$i$,
or it does contain~$i$ and has at least~$3$ elements.
So,~ each~$\theta_i$ is in the convex hull of the remaining ones (and all these numbers are in the interval~$(-\pi,\pi]$). Picking~$i$
to correspond to the smallest~$\theta_i$, we get a contradiction.

A different way of seeing this last step is to think
of~$(\theta_1,\ldots,\theta_r)$ as a solution~${\mathbf x}$
to the linear system of equations~${\mathbf A}{\mathbf x}={\mathbf x}$,
where~${\mathbf A}$ is the~$r\times r$ matrix
with entry~$d_{i,j}/4$
in position~$(i,j)$ if~$i\in \{1,\ldots,r\}$ and~$j\in I_i$, and~$0$ otherwise.
Then~${\mathbf A}$ is a matrix whose entries are non-negative,
has row sums equal to~$1$ and each row contains at least two non-zero entries.
It is straightforward
to see that the  eigenspace corresponding to the eigenvalue~$1$
of such a matrix
is one dimensional, and is spanned by~$(1,1,\ldots,1)^T$.
Hence,~$\theta_i=\theta_j$ for~$i=1,\ldots,r$, which is a contradiction.

This finishes the proof of step~(c) and the first proof of the main theorem.

\section{A~$p$-adic proof of the main theorem}

In this section we give a different proof of
a slightly stronger statement than Theorem~\ref{thm:1},
by reducing both the elliptic
and the linear recurrence sequence modulo carefully
chosen primes, and finding incompatible behaviours.
Two remarks are in order here. First, we will need
to call on results elsewhere that guarantee a sufficient
supply of primes with the required properties. Second,
the arithmetic argument here is one approach
and there may be others.

\begin{theorem}\label{thm:2}
Let~$E:y^2=x^3+Ax+B$ be an elliptic
curve defined over the rationals,
let~$P=(x_1/z_1^2,y_1/z_1^3)\in E(\Q)$ be a point
of infinite order, let~$\seqn{z_n}$
be an integer sequence satisfying~$nP=(x_n/z_n^2,y_n/z_n^3)$,
and let~$\seqn{u_n}$ be a linear recurrence
sequence. Then there is an infinite
set of primes~$p$ with the property that
a period of~$\seqn{u_{n^2}}$ modulo~$p$
cannot be a period of~$\seqn{z_n}$ modulo~$p$.
In particular, no linear recurrence
sequence~$\seqn{u_n}$ has the property
that~$\seqn{z_{n^{\vphantom{2}}}}=_e\seqn{u_{n^2}}.$
\end{theorem}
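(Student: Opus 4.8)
The plan is to convert the claimed identity into an incompatibility between the periods of the two sides modulo carefully chosen primes~$p$, and then to violate that incompatibility. For all but finitely many primes~$p$ — those of bad reduction, those dividing~$z_1$, and those dividing the leading coefficient~$c_k$ of the recurrence — the point~$P$ reduces to a point of exact finite order~$m_p$ in~$E(\F_p)$ (its \emph{rank of apparition}), with $p\mid z_n$ precisely when $m_p\mid n$; moreover~$\seqn{z_n}$ is eventually periodic modulo~$p$ (classical for elliptic divisibility sequences, and in any case automatic once one assumes $\seqn{z_n}=_e\seqn{u_{n^2}}$), and since its zero set~$m_p\Z$ is invariant under translation by the period, $m_p$ divides the minimal period~$\pi_p(z)$ of~$\seqn{z_n}\bmod p$. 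On the other side, writing~$\seqn{u_n}$ in the Binet form~\eqref{eq:Binet}, every characteristic root lies in an extension of~$\F_p$ of degree at most~$k$, so the part coprime to~$p$ of the minimal period~$\pi_p(u)$ of~$\seqn{u_n}\bmod p$ divides $\Lambda_p:=\mathrm{lcm}(p-1,p^2-1,\dots,p^k-1)$; and since $(n+\pi_p(u))^2\equiv n^2\pmod{\pi_p(u)}$, the minimal period~$\pi_p(u^{\mathrm{sq}})$ of~$\seqn{u_{n^2}}\bmod p$ divides~$\pi_p(u)$. Consequently, if~$p$ is one of these good primes and~$m_p$ is a prime~$\ell\ne p$ with $\ell>k+1$ and $\mathrm{ord}_\ell(p)>k$, then $\ell\nmid p^d-1$ for $1\le d\le k$, so $\ell\nmid\Lambda_p$, hence $\ell\nmid\pi_p(u^{\mathrm{sq}})$; since $\ell=m_p$ divides~$\pi_p(z)$, the period~$\pi_p(u^{\mathrm{sq}})$ of~$\seqn{u_{n^2}}\bmod p$ is not a period of~$\seqn{z_n}\bmod p$. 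So Theorem~\ref{thm:2}, together with the `in particular' (eventually equal sequences having the same eventual periods), follows once one produces infinitely many such primes~$p$.

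To find these primes, fix a large prime~$\ell$. Silverman's primitive divisor theorem~\cite{MR961918} guarantees that~$z_\ell$ has a primitive prime divisor~$p$ for all large~$\ell$, and since~$\ell$ is prime this forces $m_p=\ell$ (and one may take~$p\ne\ell$, since the primitive part of~$z_\ell$ is large); one then needs $\mathrm{ord}_\ell(p)>k$, namely that~$p$ avoids the thin set $\{x\bmod\ell:\mathrm{ord}_\ell(x)\le k\}$ of size~$\gcd(k!,\ell-1)$. The most robust way to secure this is Galois-theoretic: choose~$\ell$ so that the mod-$\ell$ image of Galois for~$E$ is as large as possible — all of~$\mathrm{GL}_2(\F_\ell)$ in the non-CM case, the normalizer of a Cartan subgroup when~$E$ has complex multiplication — and so that the Kummer extension generated over the $\ell$-torsion field by an $\ell$-division point of the infinite-order point~$P$ is also as large as possible; then the Chebotarev density theorem produces a positive density of primes~$p$ whose Frobenius fixes the line through~$\overline P\bmod\ell$ — forcing $\ell\mid m_p$ — and has its second eigenvalue~$\lambda$ of multiplicative order exceeding~$k$ in~$\F_\ell^*$ — forcing $p\equiv\lambda\pmod\ell$, hence $\mathrm{ord}_\ell(p)>k$. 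Removing the finitely many inadmissible primes and letting~$\ell$ range over an infinite set of admissible primes yields infinitely many~$p$ as required. (Alternatively one can try to stay inside the elliptic-divisibility-sequence world: since the primitive part of~$z_\ell$ has logarithmic size of order~$\ell^2$, it cannot be a product of primes all drawn from the~$\le k!$ residue classes in $\{x:\mathrm{ord}_\ell(x)\le k\}$ without forcing~$z_\ell\bmod\ell$ into that set, which one would then contradict using the $\ell$-adic behaviour of the $\ell$-division polynomial evaluated at~$P$.)

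The main obstacle is exactly this last step: the simultaneous control, for a fixed large~$\ell$, of the order of~$\overline P$ in~$E(\F_p)$ and of the residue of~$p$ modulo~$\ell$. Forcing $\ell\mid m_p$ is a Kummer-theoretic assertion about the point~$P$, and this is precisely where the hypothesis that~$P$ has infinite order enters: it is what makes the $\ell$-division field of~$P$ strictly — and, for large~$\ell$, maximally — larger than the $\ell$-torsion field of~$E$, so that the required Frobenius conjugacy class is non-empty and carries positive density. Making this uniform enough in~$\ell$ to run Chebotarev, and carrying out the parallel analysis inside the Cartan subgroup in the complex-multiplication case, is the technical heart of the argument; the period estimates and the disposal of the finite exceptional set of primes are routine.
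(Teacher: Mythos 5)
Your proposal is correct in outline and follows the same basic strategy as the paper's $p$-adic proof of Theorem~\ref{thm:2}: fix a large auxiliary prime $\ell$ (the paper's $q$), produce infinitely many primes $p$ for which $\ell$ divides the order of $P$ in $E(\F_p)$ while $p\bmod\ell$ has multiplicative order exceeding $k$, and then play the divisibility by $\ell$ of every period of $\seqn{z_n}$ modulo $p$ against the fact that $\ell$ cannot divide a period of $\seqn{u_{n^2}}$ modulo $p$. Where you genuinely diverge --- and simplify --- is on the recurrence side: you bound the period of $\seqn{u_n}$ modulo $p$ directly from the Binet form \eqref{eq:Binet} by $p\cdot\mathrm{lcm}(p^d-1:1\le d\le k)$ (each root reduces to a unit lying in an extension of $\F_p$ of degree at most $k$ once $p\nmid c_k$ and $p$ avoids the denominators of the $P_i$), pass to $\seqn{u_{n^2}}$ via $(n+T)^2\equiv n^2\pmod T$, and combine this with the rank-of-apparition observation that every eventual period of $\seqn{z_n}$ modulo $p$ is a multiple of $m_p$; this yields both assertions of the theorem at once. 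The paper instead reaches the same coprimality through the assumed relation \eqref{eqn:eventualequivalence} and the lengthy Lemma~\ref{lemma:deducedcongruences} (Vandermonde determinants, character-sum estimates, Hensel lifting, and implicitly the non-degeneracy of the recurrence); your route avoids that appendix entirely and is sound --- note only that you need merely $\ell\mid m_p$, not $m_p=\ell$, which is convenient since the Galois-theoretic construction naturally gives divisibility. For the supply of primes you invoke exactly the inputs behind the paper's Theorem~\ref{thm:3} (Serre's open image theorem together with the Kummer theory of the $\ell$-division points of $P$, i.e.\ Bachmakov/Ribet, plus Chebotarev; the paper quotes the resulting positive density from Meleleo--Pappalardi and David--Wu), so this part is the same input left at the same level of citation, though your statement of the Frobenius condition is off: what forces $\ell\mid m_p$ is not that Frobenius ``fixes the line through $\overline{P}$'' but that Frobenius, viewed as an affine transformation $(J,u)$ of $E_P[\ell]$ with $J$ having eigenvalues $1$ and $\lambda=p\bmod\ell$ of order greater than $k$, has \emph{no} fixed point, i.e.\ $u\notin\mathrm{Im}(J-I_2)$; a fixed division point would instead make $\overline{P}$ divisible by $\ell$ in $E(\F_p)$, the opposite of what you want. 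Your remark that the CM case requires the Cartan-normalizer analogue is a genuine point of care that the paper passes over in silence, and your alternative primitive-divisor route (Silverman) correctly produces $m_p=\ell$ but, as you acknowledge, does not by itself control $p\bmod\ell$, so the Galois-theoretic argument remains the essential engine.
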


This approach permits some arithmetic perturbation of
the sequences without affecting the conclusion. Specifically,
if~$\seqn{z_n}$ is replaced by any sequence~$\seqn{z_nw_n}$
where the set of primes dividing any term of~$\seqn{w_n}$
is finite, then the same conclusion holds. This allows us, in particular, to assume that the sign of $z_n$ is 
arbitrarily chosen. This means once
again that the proof gives the same result for elliptic
divisibility sequences defined in terms of the bi-linear
recurrences~\eqref{edsrecurrence1}
and~\eqref{edsrecurrence2}.

\subsection{Orders of points on elliptic curves}

For a prime~$p$,
let~$E({\mathbb F}_p)$ be the set of solutions modulo~$p$ of the equation~\eqref{eq:1}
reduced modulo~$p$, together with the  point at infinity~$O$.
Write, in the usual notation,
\[
\#E({\mathbb F}_p)=p-a_p+1
\]
for the number of~$\F_p$-points on~$E$.
Then~$a_p\in(-2{\sqrt p},2{\sqrt p})$
by Hasse's theorem, and if~$p\nmid \Delta_E$,
then~$E({\mathbb F}_p)$ forms a group with the group law
inherited from the Mordell--Weil
group law on~$E$ reduced modulo~$p$.
When~$p$ divides~$\Delta_E$, we have~$a_p\in \{0,\pm 1\}$
(we refer to Silverman~\cite[Ch.~5]{MR817210}
for standard results on~$E(\F_p)$).
If~$p\nmid \Delta_E z_1$, then~$P=(x_1/z_1^2,y_1/z_1^3)$ can be
thought of as
a point on~$E({\mathbb F}_p)$
which is not the identity, and from now on we make the
assumption that~$P$ is a non-torsion point on~$E(\Q)$
and that~$p\nmid \Delta_E z_1$.
We also claim that -- for our purposes -- we may safely
assume that~$x_1y_1\ne 0$. To see this, notice first
that~$y_1\ne 0$ (since if~$y_1=0$ then~$P$ is of order~$2$ in~$E(\Q)$).
If~$x_1=0$, then after replacing~$P$ by~$2P$
(which is still of infinite order)
we have~$x_1\ne 0$.
Furthermore, the order of~$2P$ modulo~$p$ either equals the order
of~$P$ modulo~$p$, or equals half of it (depending of whether the order of~$P$ modulo~$p$ is odd or even). Thus for any odd prime~$q$
the order of~$2P$ modulo~$p$ is a multiple of~$q$
if and only if the order of~$P$ modulo~$p$ is a multiple of~$q$.
Hence, for the purpose of deciding whether the order of~$P$ modulo~$p$
is a multiple of~$q$ or not, we may replace, if we wish,~$P$ by~$2P$
and so assume that~$x_1y_1\neq0$ below.

Now let~$q$ be a fixed large prime.
We will ask what can be said about the set of primes~$p$
with the property that the order of~$P\in E({\mathbb F}_p)$
is divisible by~$q$.
In order to do this, we will make use of recent work of
Meleleo and Pappalardi
(which may be found in the thesis of Meleleo~\cite{GMthesis};
there is also a video presentation by Pappalardi~\cite{MPvideo}
of some of the results; the density
statement we need may also be found in a paper
of David and Wu~\cite{MR2932170}).
Before doing this, we need to recall some group-theoretic
properties of~$E(\F_p)$.

Let~$E[q]=\{Q\mid qQ=O\}$ denote the subgroup of~$q$-torsion points in the curve~$E$.
As an~${\mathbb F}_q$-vector space,~$E[q]$ can be identified with~${\mathbb F}_q^2$.
Adjoining the coordinates of the points~$Q\in E[q]$ to~$\Q$
gives a Galois extension of~$\Q$ with Galois group
isomorphic to a subgroup of~${\rm GL}_2({\mathbb F}_q)$ (we
will supress this isomorphism and simply speak of the Galois
groups arising as being given by matrix or affine groups).
Serre's open image theorem~\cite{MR0387283}
says that there exists a positive integer~$\Delta_{1,E}$
depending on~$E$
with the property that if~$q\nmid\Delta_{1,E}$, then this Galois group
is all of~${\rm GL}_2({\mathbb F}_q)$ (We assume that~$\Delta_{1,E}$
is already divisible by all the prime factors of~$\Delta_E$).

Suppose now that we want to study the density of the
set of primes~$p$ such that~$a_p$ and~$p$ have prescribed
values modulo~$q$, say~$a$ and~$b$. Then one can identify
the Frobenius action of such a prime~$p$ with the equivalence class
of a~$2\times 2$ matrix in~${\rm GL}_2({\mathbb F}_q)$ whose
trace is~$a$ modulo~$q$ and whose determinant
is~$b$ modulo~$q$. That is, for given residue
classes~$a$ and~$b\not\equiv 0$ modulo~$q$, the density
\[
\lim_{x\to\infty}
\frac{\#\{p\le x\mid a_p\equiv q\pmod q~{\text{\rm and}}~p\equiv b\pmod q\}}{\pi(x)}=\delta_{q;a,b},
\]
exists and equals
\begin{equation}\label{eq:positiveproportion}
\delta_{q;a,b}=\frac{\#\{J\in {\rm GL}_2({\mathbb F}_q)\mid
{\rm tr}(J)=a,~{\text{\rm and}}~{\rm det}(J)=b\}}{\#{\rm GL}_2({\mathbb F}_q)}
\end{equation}
where we write as usual~$\pi$ for the prime counting function.
In particular,~$\delta_{q;a,b}$ is always positive since the conditions
in~\eqref{eq:positiveproportion} define a positive proportion of~$\rm{GL}_2(\F_q)$.
Assume next that we want to add the point~$P$ to the picture and see what happens
to its order in~$E({\mathbb F}_p)$ modulo~$q$.

We claim that~$E_P[q]=\{R\mid qR=P\}$ is a two-dimensional affine~$\mathbb{F}_q$ vector
space. To see this, pick some~$R_0\in E_P[q]$ and
notice that~$R\in E_P[q]$ if and only if~$R-R_0\in E[q]$,
itself
identified with
a two-dimensional~$\mathbb{F}_q$ vector space.
We also adjoin the
coordinates of the points of~$E_P[q]$ to~$\Q$,
in addition to the coordinates of the points in~$E[q]$.
Then, by an analogue of Serre's open mapping theorem
due to Bachmakov~\cite{MR0269653} (an accessible and
thorough treatment of the results outlined there
may be found in a paper of Ribet~\cite{MR552524}),
there exists a constant~$\Delta_{2,E,P}$
depending both on~$P$ and~$E$ such that if~$q\nmid\Delta_{2,P,E}$,
then the Galois group of this extension is the full
group of affine
transformations of a
two-dimensional affine~${\mathbb F}_q$--space, namely
\[
{\text{\rm Aff}}(E_P[q])={\rm GL}_2(\F_q)\ltimes\F_q^2,
\]
where~${\text{\rm GL}}_2({\mathbb F}_q)$
acts on~${\mathbb F}_q^2$ by linear automorphisms.
That is, the group law is~$(\phi,u)\circ (\psi,v)=(\phi \psi,\phi(v)+u)$.
We assume that~$\Delta_{2,E,P}$ contains all the prime factors of~$\Delta_{1,E}$
and of~$x_1y_1z_1$ (as pointed out above, we may assume that~$x_1y_1\neq0$).
By~\cite{GMthesis}, it follows that
if~$q$ does not divide~$\Delta_{2,E,P}$, then
\[
\lim_{x\to\infty}\negmedspace\frac{\#\{p\negthinspace\le\negthinspace x\mid a_p\negthinspace\equiv\negthinspace q
\negmedspace\negmedspace\pmod p,~p\negthinspace\equiv\negthinspace b\negmedspace\negmedspace\pmod p,~q\mid {\text{\rm ord}}_{E({\mathbb F}_p)}(P)\}}{\pi(x)}
=
\delta_{q;a,b,P},
\]
where
\[
\delta_{q;a,b,P}=\frac{\#\{(J,u)\in {\rm GL}_2({\mathbb F}_q)\mid {\rm tr}(J)=a,~{\rm det}(J)=b,~u\not\in {\text{\rm Im}}(J-I_2)\}}{\#\left({\rm GL}_2({\mathbb F}_q)\ltimes {\mathbb F}_q^2\right)}.
\]
Note first of all that~$a$ and~$b$ have to be chosen
so that~$p-a_p+1=b-a+1$ is a multiple of~$q$,
so in particular~$b\equiv a-1\pmod q$.
Now if
\[
(J,u)=\left(\left(\begin{matrix} a-1 & -1\\ 0 & 1\\ \end{matrix}\right),\left(\begin{matrix} 1\\ 1\\ \end{matrix} \right)\right)
\]
then~$\rm{tr}(J)=a$,~$\det(J)=a-1=b$, and
\[
u\not\in {\text{\rm Im}}(J-I_2)=\left\{\left(\begin{matrix} x\\ 0\end{matrix}\right), x\in {\mathbb F}_q\right\}.
\]
This shows that~$\delta_{q;a,a-1,P}$ is positive.
We record this conclusion as the following theorem.

\begin{theorem}\label{thm:3}
Let~$a\ge 2$ be an integer,~$E$ be an elliptic curve defined over~$\Q$
with a point of infinite order~$P\in E(\Q)$.
Then there exists~$\Delta=\Delta(E,P)$
such that if a fixed prime~$q$ does not divide~$\Delta$,
then the set of primes~$p\equiv a-1\pmod q$ with~$a_p\equiv a\pmod q$ and~$P\mod q$ having order a multiple of~$q$
in~$E(\F_p)$ has density~$\delta_{q;a,a-1,P}>0$.
\end{theorem}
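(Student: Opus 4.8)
The statement records the conclusion already reached in the discussion preceding it, so the plan is simply to assemble that discussion into a proof. I would take $\Delta=\Delta_{2,E,P}$, the constant furnished by Bachmakov's analogue~\cite{MR0269653} of Serre's open image theorem; by the conventions fixed above this already forces $q$ to avoid every prime dividing $\Delta_E$, $\Delta_{1,E}$ and $x_1y_1z_1$, so that the earlier normalisations remain available and, for $q\nmid\Delta$, the Galois group of $\Q(E_P[q])/\Q$ is the full affine group $G_q={\rm Aff}(E_P[q])={\rm GL}_2(\F_q)\ltimes\F_q^2$ acting on the two-dimensional affine $\F_q$-space $E_P[q]$.

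The core step is the Chebotarev density theorem applied to this extension. For a prime $p$ of good reduction ($p\nmid q\Delta_E z_1$) I would record ${\rm Frob}_p$ as a pair $(J,u)\in G_q$, where $J\in{\rm GL}_2(\F_q)$ is the Frobenius action on $E[q]$ and $u\in\F_q^2$ is the translation part describing the action on the $E[q]$-torsor $E_P[q]$. The dictionary is: reducing the characteristic polynomial of Frobenius modulo $q$ gives $\det(J)\equiv p$ and ${\rm tr}(J)\equiv a_p\pmod q$; and $E_P[q]$ has an $\F_p$-rational (equivalently ${\rm Frob}_p$-fixed) point exactly when the affine map $(J,u)$ has a fixed point, i.e.\ when $u\in{\rm Im}(J-I_2)$, which is the assertion $\bar P\in qE(\F_p)$. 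Since $q\nmid{\rm ord}_{E(\F_p)}(\bar P)$ would force $\bar P\in qE(\F_p)$, the negation $u\notin{\rm Im}(J-I_2)$ implies $q\mid{\rm ord}_{E(\F_p)}(\bar P)$; this is the implication actually needed, the precise equality of densities being imported from the work of Meleleo and Pappalardi~\cite{GMthesis} (equivalently David--Wu~\cite{MR2932170}). The constraints $p\equiv a-1\pmod q$ and $a_p\equiv a\pmod q$ are consistent with the necessary congruence $q\mid\#E(\F_p)=p-a_p+1$, and they force $\det(J)\equiv a-1\pmod q$, which is nonzero provided $q\nmid a-1$ (in particular whenever $q\ge a$), so that matrices realising them lie in ${\rm GL}_2(\F_q)$. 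Finally the set of pairs $(J,u)\in G_q$ with ${\rm tr}(J)=a$, $\det(J)=a-1$ and $u\notin{\rm Im}(J-I_2)$ is invariant under $(J,u)\mapsto(\phi J\phi^{-1},\phi u)$, hence is a union of conjugacy classes, so Chebotarev yields that the primes with all three prescribed properties have density equal to the proportion $\delta_{q;a,a-1,P}$ of $G_q$ occupied by this set.

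It then remains only to check $\delta_{q;a,a-1,P}>0$, i.e.\ that this set is nonempty, which I would do by exhibiting the single pair
\[
(J_0,u_0)=\left(\left(\begin{matrix}a-1&-1\\0&1\end{matrix}\right),\left(\begin{matrix}1\\1\end{matrix}\right)\right),
\]
for which ${\rm tr}(J_0)=a$, $\det(J_0)=a-1$, and $J_0-I_2=\left(\begin{matrix}a-2&-1\\0&0\end{matrix}\right)$ has image contained in $\{(x,0)^{\rm T}:x\in\F_q\}$, so that $u_0\notin{\rm Im}(J_0-I_2)$. Hence $\delta_{q;a,a-1,P}\ge 1/\#G_q>0$.

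The genuinely delicate ingredients are not established here but \emph{imported}: that $E_P[q]$ is a two-dimensional affine $\F_q$-space whose Galois action surjects onto $G_q$ for all but finitely many $q$ (Bachmakov), and the translation of the divisibility $q\mid{\rm ord}_{E(\F_p)}(\bar P)$ into a condition on the Frobenius pair together with the resulting density formula (Meleleo--Pappalardi / David--Wu). With those available the only thing verified by hand is the nonemptiness of a union of conjugacy classes, witnessed by the explicit matrix above, so I expect no real obstacle: the theorem is essentially a repackaging of facts established or cited in the paragraphs that precede its statement.
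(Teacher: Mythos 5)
Your proposal is correct and follows essentially the same route as the paper: take $\Delta=\Delta_{2,E,P}$ from Bachmakov's analogue of Serre's open image theorem, import the density formula for the condition $q\mid{\rm ord}_{E(\F_p)}(P)$ from Meleleo--Pappalardi (David--Wu), and verify positivity with exactly the same witness pair $(J,u)$ with ${\rm tr}(J)=a$, $\det(J)=a-1$, $u\notin{\rm Im}(J-I_2)$. Your added remarks (the Frobenius dictionary and the proviso $q\nmid a-1$) only make explicit what the paper leaves implicit.
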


\subsection{Proving Theorem \ref{thm:2}}

Let~$a=3$, let~$q$ be a fixed sufficiently large prime
(large in a sense that will be made more precise later),
and let~$P_{q;3,2,P}$ be the set of primes~$p$
with the property in the statement of Theorem~\ref{thm:2}.
Finally, let~$p$ be a large prime in~$P_{q;3,2,P}$.
In particular, we may assume that~$p$ does not divide
the denominators, nor the norms (from~$\K$ to~$\Q$) of the numerators of any of the
polynomials~$P_i(X)\in\K[X]$ appearing in the formula~\eqref{eq:Binet}
for the terms of the linear recurrence sequence,
and that~$p$ does not divide the last coefficient~$c_k$ of
the characteristic polynomial~$\Psi(X)$ either.
We put
\[
L={\rm{lcm}}\{p^j-1\mid 1\le j\le d\}.
\]
Note that since by construction~$p\equiv 2$ modulo~$q$,
we have~$p^j-1\equiv 2^j-1$ modulo~$q$
for~$j=1,\dots,k$. Thus, for large~$q$, we have~$q\nmid L$.
Let~$T$ be the period modulo~$p$ of~$\seqn{z_n}$.
It follows from a theorem of Silverman~\cite[Th.~1]{MR2178070})
that~$T\mid 2(p-2)\#E({\mathbb F}_p)$.
Further, since the order of~$P$ modulo~$p$ is divisible by~$q$, it follows that
\[
q\mid T\mid 2(p-1)(p-a_p+1).
\]
To get a contradiction, we work with the
sequence~$\seqn{u_{n^2}}$
for large~$n$, and show that its period modulo~$p$ is coprime to~$q$. This will give us the contradiction.

Assume therefore that~\eqref{eqn:eventualequivalence} holds, so in particular
there is some~$n_0$ for which
\begin{equation}\label{eq:cong}
u_{(n+mT)^2}\equiv u_{n^2}\pmod p
\end{equation}
for all~$n\ge n_0$ and~$m\ge0$.
Let~$\pi$ be a prime ideal of~${\mathbb K}$
lying above the rational prime number~$p$.
The congruence~\eqref{eq:cong} together with Binet's formula~\eqref{eq:Binet}
give
\begin{equation}\label{eq:Sums}
\sum_{i=1}^s \alpha_i^{n^2}(P_i((n+mT)^2)\alpha_i^{2mnT+m^2T^2}-P_i(n^2))\equiv 0\pmod {\pi}.
\end{equation}
Write~$\mathcal{S}$ for the set of primes dividing~$T$
together with all primes smaller than some~$p_0$,
where~$p_0$ is a sufficiently large number to be determined later.
Let~$N$ be the largest divisor of~$L$
composed only of primes from~${\mathcal S}$.
Suppose that~$m=pN\ell$ for some integer~$\ell\ge 0$
in~\eqref{eq:Sums} and use the fact that
\[
P_i((n+pN\ell)^2)\equiv P(n^2)\pmod \pi,
\]
to deduce that
\begin{equation}\label{eq:www}
\sum_{i=1}^s \alpha_i^{n^2} P_i(n^2)(\beta_i^{2\ell n +pNT\ell^2}-1)\equiv 0\pmod {\pi}
\end{equation}
where~$\beta_i:=\alpha_i^{pNT}$ for~$1\le i\le s$.
Once again we postpone the lengthy proof of the next lemma
to the appendix.

\begin{lemma}\label{lemma:deducedcongruences}
If~$p_0$ is sufficiently large,
then the congruence~\eqref{eq:www} implies
that
\[
\beta_i\equiv 1\pmod \pi
\]
for~$i=1,\ldots,s$.
\end{lemma}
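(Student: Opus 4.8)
The plan is to argue inside the finite residue field~$k_{\pi}=\mathcal O_{\K}/\pi$ and to reduce Lemma~\ref{lemma:deducedcongruences} to a bound on~$\mathrm{ord}_{\pi}(\bar\beta_i)$ that does not depend on~$p$ or~$\pi$. Since~$\Psi$ is monic and~$p\nmid c_k=\pm\prod_i\alpha_i^{\sigma_i}$, every~$\bar\alpha_i$ is a unit of~$k_{\pi}$ and, being a root of the degree-$k$ reduction~$\bar\Psi$, lies in a subfield of~$k_{\pi}$ of degree~$\le k$ over~$\F_p$; hence~$\mathrm{ord}_{\pi}(\bar\alpha_i)\mid L$. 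Factor~$L=NL'$ with~$N$ as in the text; every prime divisor of~$L'$ is~$\ge p_0$, is prime to~$T$, and (since~$p\nmid L$) differs from~$p$, so from~$\gcd(L',pNT)=1$ one sees that~$\mathrm{ord}_{\pi}(\bar\beta_i)=\mathrm{ord}_{\pi}(\alpha_i^{pNT})$ is exactly the~$L'$-part of~$\mathrm{ord}_{\pi}(\bar\alpha_i)$, in particular a product of primes~$\ge p_0$. It therefore suffices to bound~$\mathrm{ord}_{\pi}(\bar\beta_i)$ by a constant depending only on~$k$ and the~$\deg P_i$: taking~$p_0$ above that constant then forces~$\mathrm{ord}_{\pi}(\bar\beta_i)=1$, i.e.~$\bar\beta_i\equiv1\pmod\pi$, for all~$i$ and~$\pi\mid p$.

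To get the bound I would specialise~\eqref{eq:www} at one well-chosen~$n$. Put~$o:=\mathrm{lcm}_i\mathrm{ord}_{\pi}(\bar\beta_i)$ — a product of primes~$\ge p_0$, so odd and prime to each of~$p$,~$N$,~$T$ — and take a large~$n\ge n_0$ with~$o\mid n$, with~$p\nmid z_n$, and with~$P_i(n^2)\not\equiv0\pmod\pi$ for all~$i$ (such~$n$ exist because~$\gcd(o,p)=1$ and the reduced polynomials~$\bar P_i$, non-zero by Lemma~\ref{lemma:leadcoefficientanddegree}, have few zeros modulo~$p$). Then~$\bar\beta_i^{\,2n\ell}=1$, as~$\mathrm{ord}_{\pi}(\bar\beta_i)\mid o\mid n$, so~\eqref{eq:www} collapses to the identity in~$k_{\pi}$
\[
\sum_{i=1}^{s}c_i\,\gamma_i^{\,\ell^{2}}=\sum_{i=1}^{s}c_i=:C\qquad(\ell\ge0),
\]
where~$\gamma_i:=\bar\beta_i^{\,pNT}$ has the same order as~$\bar\beta_i$ (that order divides~$L'$, prime to~$pNT$), the~$c_i\in k_{\pi}^{\times}$ are the reductions of~$\alpha_i^{n^2}P_i(n^2)$, and~$C=\bar z_n\ne0$. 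Group the indices by the common value~$w$ of~$\gamma_i$, set~$C_w=\sum_{\gamma_i=w}c_i$, and adjoin the value~$w=1$ with~$C_1=0$ if needed; then~$\sum_w C_w w^{\ell^{2}}=C$ for all~$\ell$, with~$\sum_wC_w=C$. Since each~$w^{\ell^{2}}$ has period~$\mathrm{ord}(w)\mid o$ in~$\ell$, multiplying by~$w_0^{-\ell^{2}}$ and summing over~$\ell=0,\dots,o-1$ (then dividing by the invertible integer~$o$) gives, for each candidate value~$w_0$,
\[
\sum_{w}C_w\,\frac{G(ww_0^{-1})}{\mathrm{ord}(ww_0^{-1})}=C\,\frac{G(w_0^{-1})}{\mathrm{ord}(w_0)},\qquad G(x):=\!\!\sum_{\ell\bmod\mathrm{ord}(x)}\!\!x^{\ell^{2}},
\]
a classical quadratic Gauss sum with~$G(1)=1$ and~$G(x)^{2}=\pm\mathrm{ord}(x)$, whence~$G(x)/\mathrm{ord}(x)\ne1$ for~$x\ne1$. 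Letting~$w_0$ run over all values turns this into a square linear system~$M(\mathbf{C}-C\mathbf{e}_1)=\mathbf{0}$; the identities~$G(x)^2=\pm\mathrm{ord}(x)$ force~$M$ to be non-singular, so~$\mathbf{C}=C\mathbf{e}_1$, i.e.~$C_1=C=\bar z_n\ne0$ (so the value~$1$ does occur) and~$C_w=0$ for every~$w\ne1$. Finally, for~$p$ large the~$\bar\alpha_i$ are pairwise distinct (discard the prime divisors of the non-zero~$N_{\K/\Q}(\alpha_i-\alpha_j)$), so a class~$\{\,i:\gamma_i=w\,\}$ with~$w\ne1$ being non-empty would give a vanishing sub-sum~$\sum_{\gamma_i=w}\bar\alpha_i^{n^2}P_i(n^2)\equiv0$ along an infinite set of admissible~$n$, and putting~$m=n^2$ and applying a Vandermonde argument this forces some~$\bar P_i=0$, contradicting Lemma~\ref{lemma:leadcoefficientanddegree}. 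Hence~$\bar\beta_i=1$ for all~$i$.

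The step I expect to be the main obstacle is making the second paragraph rigorous while keeping all constants uniform in~$p$ and~$\pi$. Over~$\C$ the constancy of~$\ell\mapsto\sum_i c_i\gamma_i^{\ell^{2}}$ is exactly the kind of statement handled by Schmidt's subspace theorem (as in Section~2), but here there is no height, so one must instead evaluate the relevant Gauss sums by hand, track the quadratic-character signs~$\bigl(\frac{-1}{\mathrm{ord}(x)}\bigr)$ entering~$G(x)^{2}=\pm\mathrm{ord}(x)$, and establish the non-singularity of the Gauss-sum matrix~$M$ — all with bounds independent of~$p$ and~$\pi$, even though the orders at play divide~$L$ and grow with~$p$. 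It is precisely this uniformity that makes the hypothesis ``if~$p_0$ is sufficiently large'' do its job.
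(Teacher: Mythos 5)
Your setup (that~$\mathrm{ord}_\pi(\bar\beta_i)$ divides~$L/N$, hence is a product of primes~$\ge p_0$ coprime to~$pNT$, and that for~$o\mid n$ the congruence~\eqref{eq:www} collapses to~$\sum_i c_i(\gamma_i^{\ell^2}-1)\equiv 0\pmod\pi$) is sound, but the heart of your argument --- the Gauss-sum linear algebra --- has a genuine gap, and it sits exactly where characteristic-zero intuition does not transfer to the residue field. The identity~$G(x)^2=\pm\,\mathrm{ord}(x)$ does reduce modulo~$\pi$, but your inference ``hence~$G(x)/\mathrm{ord}(x)\ne 1$ for~$x\ne1$'' is an archimedean statement ($|G|=\sqrt d<d$); in~$k_\pi$ it amounts to~$d\not\equiv\pm1\pmod p$ for~$d=\mathrm{ord}(x)$, a divisor of~$L$ that varies with~$p$, and nothing guarantees this. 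Worse, the asserted non-singularity of the matrix~$M$ is equivalent to saying that the relations~$\sum_w C_w w^{\ell^2}=C$ for all~$\ell$, together with~$\sum_w C_w=C$, force~$C_w=0$ for~$w\ne1$; that implication is not a consequence of~$G(x)^2=\pm\,\mathrm{ord}(x)$ and is false in general even over~$\C$: for any odd~$d$ the functions~$\ell\mapsto w^{\ell^2}$ with~$w^d=1$ span a space of dimension only~$(d+1)/2$ (for~$d=3$ one has the explicit relation~$\zeta\cdot1^{\ell^2}+\zeta^2\cdot\zeta^{\ell^2}+(\zeta^2)^{\ell^2}=0$ for all~$\ell$, which can be added to the trivial solution without changing~$C$ or~$\sum_wC_w$). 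Whether independence holds for your particular handful of~$\gamma_i$ of large prime-power order, and --- decisively --- whether the relevant determinant is a unit at~$\pi$, is precisely the content of the lemma; taking~$p$ or~$p_0$ large cannot rescue it, because the algebraic numbers entering that determinant themselves depend on~$p$. Your closing step has the same defect: with~$n$ confined to one residue class modulo~$p$ and to multiples of~$o$, the ``Vandermonde argument'' requires a matrix with entries~$\bar\alpha_i^{n_u^2}$ to be invertible modulo~$\pi$, which is not automatic. You flag this uniformity issue yourself in your last paragraph, so the core of the proof is missing rather than merely compressed.

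For contrast, the paper's proof is built so that no unproved non-degeneracy modulo~$\pi$ is ever needed. It chooses~$n_0$ subject to quadratic-residue conditions modulo every prime~$r\mid L/N$ (solvable for~$r\ge p_0$ by the Weil bound --- this is where ``$p_0$ sufficiently large'' enters), so that the quadratic congruence~$2\ell n_0+pNT\ell^2\equiv j\pmod{L/N}$ is solvable for each~$j=1,\dots,t$; specializing~\eqref{eq:www} at these~$\ell_j$ yields the system~$\sum_j Q_j(n_0)(\beta_j^u-1)\equiv0\pmod\pi$ for~$u=1,\dots,t$, whose determinant factors explicitly as~$\prod_i(\beta_i-1)\prod_{i<j}(\beta_i-\beta_j)$, and the non-vanishing of that product modulo~$\pi$ is exactly the negation of the assumed configuration. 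The only auxiliary non-vanishing, namely~$Q_j(n_0)\not\equiv0\pmod\pi$, is secured by proving the characteristic-zero determinant~\eqref{eq:det} non-zero via an~$S$-unit-equation argument and then taking~$p$ large relative to its norm. To salvage your route you would need a comparable mechanism forcing your Gauss-sum determinants to be units at~$\pi$; as written, the lemma is not established.
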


Thus~$\alpha_i^{pTN}\equiv 1\pmod \pi$
for any prime ideal~$\pi$ of
the ring of integers~${\mathcal O}_{\mathbb K}$
lying above the prime~$p$.
However, the order of~$\alpha_i$ modulo~$\pi$
divides~$L$, and~$L$ is neither a multiple of~$q$,
nor of~$p$. Also, by construction,~$p$ divides
neither~$L$ nor~$p-a_p+1$.
So, writing~$b_q$ for the exponent of~$q$ in~$T$,
we get that~$\alpha_i^{NT/q^{b_q}}\equiv 1\pmod \pi$.
Since~$p$ is large (and in particular,~$p$
does not divide the discriminant of~${\mathbb K}$),
we conclude that~$p$ splits in distinct prime
ideals~$\pi$ in~${\mathcal O}_{\mathbb K}$.
The above argument then shows
that~$\alpha_i^{NT/q^{b_q}}\equiv 1\pmod p$
for all~$i=1,\ldots,r$.
By the Binet formula~\eqref{eq:Binet},
this means that~$pNT/q^{b_q}$ is a period
of~$\seqn{u_{n^2}}$
modulo~$p$.
By the assumption~\eqref{eqn:eventualequivalence},
it follows that~$pNT/q^{b_q}$
is also a period of~$\seqn{z_n}$.
Hence,~$T\mid pNT/q^{b_q}$, which is not
possible by the definition of~$b_q$.
This contradiction completes the~$p$-adic proof.

\section*{Appendix}

We assemble here some of the calculations used earlier.
Write
\[
P(X)=a_0X^d+a_1X^{d-1}+a_2X^{d-3}+\cdots+a_d.
\]
For some non-zero number~$\alpha$ consider the polynomial
\begin{align}
Q(X)
&:=
P((2X+1)^2)\nonumber\\
&\qquad\qquad-\alpha^3\left(P((X+2)^2)P(X^2)^3-
P((X-1)^2)P((X+1)^2)^3\right).\label{eq:6}
\end{align}

\begin{lemma}\label{lemma:leadcoefficientanddegree}
If~$d=0$ then~$Q(X)$ is the constant~$a_0$, and if~$d>0$ then
\[
Q(X)=-4da_0^4\alpha^3 X^{8d-3}+\mbox{monomials of lower order in~$X$}.
\]
\end{lemma}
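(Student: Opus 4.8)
The plan is to prove this by a direct coefficient computation that a reflection symmetry keeps under control. The case $d=0$ is immediate: then $P\equiv a_0$, both products $P((X+2)^2)P(X^2)^3$ and $P((X-1)^2)P((X+1)^2)^3$ equal $a_0^4$, and $Q(X)=P((2X+1)^2)=a_0$. So assume $d\ge1$ from now on.

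The first move is to set $f(X):=P(X^2)$, an even polynomial of degree $2d$ with $f(X)=a_0X^{2d}+a_1X^{2d-2}+(\text{lower-degree terms})$, and to note that $P((X+c)^2)=f(X+c)$ for every constant $c$. Thus the quantity whose degree must be controlled is
\[
B(X):=P((X+2)^2)P(X^2)^3-P((X-1)^2)P((X+1)^2)^3=f(X+2)f(X)^3-f(X-1)f(X+1)^3 .
\]
Putting $h(X):=f(X)^3f(X+2)$ and using that $f$ is even (so $f(-X-1)=f(X+1)$ and $f(-X+1)=f(X-1)$), one gets $h(-X-1)=f(X+1)^3f(X-1)$, whence $B(X)=h(X)-h(-X-1)$. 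In particular $B$ is antisymmetric under $X\mapsto-X-1$; equivalently, the polynomial $X\mapsto B(X-1/2)$ is odd.

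From here the plan is: (i) expand $h(-X-1)=\sum_k h_k(-1)^k(X+1)^k$ to express the top coefficients of $B$ in terms of $h_k:=[X^k]h$, which gives $[X^{8d}]B=[X^{8d-1}]B=[X^{8d-2}]B=0$ (these use only the simple values $h_{8d}=a_0^4$ and $h_{8d-1}=4da_0^4$) together with
\[
[X^{8d-3}]B=2h_{8d-3}-(8d-2)h_{8d-2}+\frac{(8d-1)(8d-2)}{2}\,h_{8d-1}-\frac{8d(8d-1)(8d-2)}{6}\,h_{8d}\,;
\]
(ii) compute the four leading coefficients of $h=f(X)^3f(X+2)$ from $f(X)^3=a_0^3X^{6d}+3a_0^2a_1X^{6d-2}+\cdots$ (whose $X^{6d-1}$ and $X^{6d-3}$ coefficients vanish, as $f$ is even) and the binomial expansion of $f(X+2)=a_0(X+2)^{2d}+a_1(X+2)^{2d-2}+\cdots$; (iii) substitute and simplify. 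Carrying this out, $h_{8d-2}$ and $h_{8d-3}$ each carry an $a_1$-term, but these cancel in $[X^{8d-3}]B$ (the cancellation amounts to the identity $2(16d-4)=4(8d-2)$), leaving, after a short polynomial identity in $d$, $[X^{8d-3}]B=4da_0^4$. Finally $P((2X+1)^2)$ has degree $2d<8d-3$ since $d\ge1$, so $Q(X)=P((2X+1)^2)-\alpha^3B(X)$ has vanishing coefficients in degrees $8d$, $8d-1$, $8d-2$ and leading term $-\alpha^3\cdot4da_0^4\,X^{8d-3}$, which is nonzero because $a_0,\alpha\ne0$.

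The main obstacle is purely bookkeeping. The vanishing of the three top coefficients is forced structurally by the $X\mapsto-X-1$ symmetry, but pinning down the leading coefficient requires tracking the first \emph{four} coefficients of $h$ rather than just the leading one, and then verifying that the stray $a_1$-contributions genuinely cancel; everything else is routine binomial expansion.
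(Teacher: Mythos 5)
Your proof is correct. The $d=0$ case, the reduction to $B(X)=f(X+2)f(X)^3-f(X-1)f(X+1)^3$ with $f(X)=P(X^2)$ even, the antisymmetry $B(X)=h(X)-h(-X-1)$ for $h=f(X)^3f(X+2)$, the shift formula $[X^{8d-3}]B=2h_{8d-3}-(8d-2)h_{8d-2}+\tbinom{8d-1}{2}h_{8d-1}-\tbinom{8d}{3}h_{8d}$, the cancellation of the $a_1$-contributions via $2(16d-4)=4(8d-2)$, and the resulting value $[X^{8d-3}]B=4da_0^4$ all check out, and since $\deg P((2X+1)^2)=2d<8d-3$ for $d\ge1$ you get exactly the stated leading term $-4da_0^4\alpha^3X^{8d-3}$ of $Q$. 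The paper carries out the same leading-coefficient computation, but head-on: it expands each of $P((X+2)^2)$, $P(X^2)$, $P((X-1)^2)$, $P((X+1)^2)$ to its top four coefficients, factors out $X^{8d}$, substitutes $y=1/X$, and multiplies the two truncated series to read off $4da_0^4y^3$ as the lowest-order term. Your reflection trick $X\mapsto -X-1$ is the genuine difference in organization: it means only the single product $h$ has to be expanded, and the vanishing of the coefficients in degrees $8d$ and $8d-2$ is forced structurally (by parity about the center $-1/2$) once the degree-$(8d-1)$ coefficient is killed by the single identity $h_{8d-1}=4d\,h_{8d}$; the price is the binomial bookkeeping for $h(-X-1)$. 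The paper's brute-force route avoids that bookkeeping but must track the cross-terms of two products of quadruply truncated factors, which is why it resorts to the $y=1/X$ substitution to organize the check.
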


\begin{proof}[Proof of Lemma~\ref{lemma:leadcoefficientanddegree}]
If~$d=0$, then~$Q(X)=
a_0^{\vphantom{3}}-\alpha^3(a_0^{\vphantom{3}}a_0^3-a_0^{\vphantom{3}}a_0^3)
=a_0^{\vphantom{3}}$ is constant.
Thus we may assume that~$d>0$.
Computer experiments with Mathematica for~$d=1,2,3$
suggest that the degree of
the polynomial
\begin{equation}
\label{eq:5}
P((X+2)^2)P(X^2)^3-P((X-1)^2)P((X+1)^2)^3
\end{equation}
is~$8d-3$, and the leading coefficient is~$4da_0^4$,
motivating the statement of the lemma.
To verify this, we compute the first three coefficients
of~$P((X+i)^2)$ for~$i=-1,0,1,2$, factor~$X^{8d}$ in the expression~\eqref{eq:5},
change variables using the substitution~$y=1/X$
inside the parentheses, and compute the order
of the resulting expression in~$y$. For example,
\begin{align*}
P((X+2)^2)
&=
a_0(X+2)^{2d}+a_1(X+2)^{2d-2}+\cdots\\
&=
a_0X^{2d}+4d a_0 X^{2d-1}+\left(\vphantom{\binom{1}{2}}\right.
\underbrace{4\binom{2d}{2} a_0+a_1}_{\Sigma_1}\left.\vphantom{\binom{1}{2}}\right)X^{2d-2}\\
&\qquad\qquad
+\left(\vphantom{\binom{1}{2}}\right.
\underbrace{8\binom{2d}{3} a_0+2(2d-2)a_1}_{\Sigma_2}\left.\vphantom{\binom{1}{2}}\right)X^{2d-3} +\cdots\\
P(X^2)
&=
a_0X^{2d}+a_1 X^{2d-2}+\cdots\\
P((X-1)^2)
&=
a_0 (X-1)^{2d}+a_1(X-1)^{2d-2}+\cdots\\
&=
a_0 X^{2d}-2d a_0 X^{2d-1}+
\left(\vphantom{\binom{1}{2}}\right.
\underbrace{\binom{2d}{2} a_0+a_1}_{\Sigma_3}\left.\vphantom{\binom{1}{2}}\right)X^{2d-2}\\
&\qquad\qquad
+\left(\vphantom{\binom{1}{2}}\right.
{-\binom{2d}{3} a_0-(2d-2)a_1}\left.\vphantom{\binom{1}{2}}\right)X^{2d-3}+\cdots\\
P((X+1)^2)
&=
a_0 X^{2d}+2d a_0 X^{2d-1}+\left(\vphantom{\binom{1}{2}}\right.
{\binom{2d}{2} a_0+a_1}\left.\vphantom{\binom{1}{2}}\right)X^{2d-2}\\
&\qquad\qquad
+\left(\vphantom{\binom{1}{2}}\right.\underbrace{\binom{2d}{3} a_0+(2d-2)a_1}_{\Sigma_4}
\left.\vphantom{\binom{1}{2}}\right) X^{2d-3}+\cdots
\end{align*}
So, putting~$y=1/X$, it remains to notice that
\begin{align*}
(a_0&+4da_0y+\Sigma_1y^2+\Sigma_2y^3)
\times(a_0+a_1y^2)^3\\
&-(a_0-2da_0y+\Sigma_3y^2-\Sigma_4y^3)
\times
(a_0+2da_0y+\Sigma_3y^2+\Sigma_4y^3)^3\\
&=(4d)a_0^4y^3+\mbox{higher powers of }y,
\end{align*}
as required.
\end{proof}

\begin{proof}[Proof of Lemma~\ref{lemma:deducedcongruences}]
Assume for the time being that this congruence
does not hold.
Up to relabeling the roots~$\alpha_1,\ldots,\alpha_s$,
we may assume that there exist~$s_1<s$ and
indices~$0<i_1<\cdots<i_t=s-s_1$ such that
\[
\beta_1\equiv \cdots \equiv \beta_{s_1}\equiv 1\pmod\pi
\]
and
\begin{equation}\label{eq:congruenceconditionforcontradiction}
\begin{cases}\beta_{s_1+1}\equiv \cdots \equiv \beta_{s_1+i_1}\equiv \gamma_1\pmod \pi \\
\medspace\medspace\vdots \\
\beta_{s_1+i_{t-1}+1}\equiv \cdots \equiv \beta_{s_1+i_t}\equiv \gamma_t\pmod \pi
\end{cases}
\end{equation}
where~$\gamma_i\not\equiv 1\pmod \pi$ for~$i\in \{1,\ldots,t\}$
and~$\gamma_i\not\equiv \gamma_j\pmod\pi$ for
distinct~$i$ and~$j$ in~$\{1,\ldots,t\}$.
Relation~\eqref{eq:www} becomes
\begin{equation}\label{eq:www1}
\sum_{j=1}^t Q_j(n)(\beta_j^{2\ell n+pNT\ell^2}-1)\equiv 0\pmod \pi
\end{equation}
where
\[
Q_j(n)=\sum_{i=s_1+i_{j-1}+1}^{s_1+i_j} \alpha_i^{n^2} P_i(n^2)
\]
for~$j=1,\dots,t$,
with the convention that~$i_0:=0$.
Notice that
\begin{equation}\label{statementforcontradiction}
\mbox{$\beta_1,\dots,\beta_t$
are distinct modulo~$\pi$ and none is congruent to~$1$}
\end{equation}
by~\eqref{eq:congruenceconditionforcontradiction}.
Write
\[
{\textstyle\frac{L}{N}}:=\prod_{r\mid L/N} r^{a_r}
\]
for the prime decomposition of~$\frac{L}{N}$.
For each prime~$r$ dividing~$\frac{L}{N}$,
choose~$n_0$ with the property
\[
\left(\frac{n_0^2+jpNT}{r}\right)
=
1
\]
for all~$j=1,\dots,t$.
To see that
such an~$n_0$ exists, note that for a fixed
prime~$r$, the number of possible residue classes
for such an~$n_0$ is
\[
I_r=\sum_{0\le n\le r-1} \prod_{1\le j\le t} \frac{1}{2} \left(\left(\frac{n^2+jpN}{r}\right)+1\right)+O(1).
\]
The~$O(1)$ term depends on~$t$
and comes from those~$n\in\{0\dots r-1\}$
for which~$n^2+jpN\equiv 0\pmod r$.
To estimate~$I_r$, expand the inner product, separate the main term and change the order of summation for the remainder
to deduce that
\[
2^tI_r=r+\sum_{{\substack{J\subset \{1,\ldots,t\}\\ J\ne\emptyset}}} \sum_{0\le n\le p-1} \!\left(\!\frac{\prod_{j\in J} (n^2+jpN)}{r}\!\right)\!+O(1)=r+O({\sqrt{r}}+1),
\]
where the implied constant in
the~$O$ term depends on~$t$.
For the above estimate, we use Weil's bound with the observation
that if~$r$ does not divide~$pNT$ and is larger than~$t$,
then the polynomial
\[
\prod_{J\subset \{1,\ldots,t\}} (x^2+jpNT)
\]
has only simple roots modulo~$r$.
This shows that~$I_r>0$ for all~$r$ sufficiently large.
So, we choose the prime~$p_0$ such that~$I_r>0$
for all~$r>p_0$. For each such fixed~$r$, fix~$n_0$ modulo~$r$
such that~$n_0^2+jpN$ is a square modulo~$r$
and extend it to~$r^{a_r}$ in some way.
We also choose~$n_0$ modulo~$p$ such
that~$P_i(n_0)\not\equiv 0\pmod p$
for all~$i=1,\ldots,s$. This is certainly possible if~$p>\sum_{i=1}^s {\text{\rm deg}}(P_i(X))$. So far,~$n_0$
has been fixed only modulo~$pL/N$, and we continue to denote by~$n_0$ the smallest possible positive value of such a number in the arithmetic progression of common difference~$pL/N$.

We claim that there are positive integers~$x_{s_1},\ldots,x_{s}$
such that
\begin{equation}
\label{eq:det}
{\text{\rm det}}\left|\begin{matrix} \alpha_{s_1+i_{j-1}+1}^{(n_0+pL/Nx_{s_1+i_{j-1}+1})^2} & \cdots &  \alpha_{s_1+i_j}^{(n_0+pL/Nx_{s_1+i_{j-1}+1})^2}\\
\alpha_{s_1+i_{j-1}+1}^{(n_0+pL/Nx_{s_1+i_{j-1}+2})^2} & \cdots &  \alpha_{s_1+i_j}^{(n_0+pL/Nx_{s_1+i_{j-1}+2})^2}\\
\cdots & \cdots & \cdots \\
\alpha_{s_1+i_{j-1}+1}^{(n_0+pL/Nx_{s_1+i_j})^2} & \cdots &  \alpha_{s_1+i_j}^{(n_0+pL/Nx_{s_1+i_j})^2}\end{matrix}\right|\ne 0
\end{equation}
for~$j=1,\ldots,t$, and will prove this by an induction
argument as follows.
\begin{itemize}
\item The statement is clear
if~$i_j-i_{i_{j-1}}=1$.
\item If~$i_j-{i_{j-1}}=2$ then the
statement holds because the
ratio
\[
\alpha_{s_1+i_j+2}/\alpha_{s_1+i_{j-1}+1}
\]
is not a root of unity by the non-degeneracy of the
linear recurrence sequence.
\item For larger values of~$i_j-i_{j-1}$
the claim follows by induction,
by first choosing~$x_{s_1+i_{j-1}+1},\ldots,x_{s_1+i_j-1}$
with the property
that the minor of size~$(i_j-i_{j-1}-1)\times (i_j-i_{j-1}-1)$
from the upper left corner is non-zero,
expanding the above determinant over the last row
treating~$x_{s_1+i_j}$ as an indeterminate,
and using the fact that the vanishing of the
resulting determinant leads to an~$S$-unit
equation in this last variable which can have
only finitely many solutions~$x_{s_1+i_j}$.
\end{itemize}

Assuming now that~$x_1,\ldots,x_{s-s_1}$ are
fixed positive integers
satisfying~\eqref{eq:det} for all~$j=1,\ldots,t$,
assume that~$p$ is larger than the norm
with respect to the extension~${\mathbb K}\supseteq {\mathbb Q}$
of each of the determinants~\eqref{eq:det}
for~$j=1,\ldots,t$.
Giving~$n$ the values~$n_0+pL/Nx_1,\cdots,n_0+pL/Nx_{s-s_1}$
in turn
and assuming that for
some~$j\in \{1,\ldots,t\}$, we have
that~$Q_j(n_0+pL/Nx_i)\equiv 0\pmod\pi$
for all~$i\in \{s_1+i_{j-1}+1,\ldots,s_1+i_j\}$,
we get the system
\begin{equation}\label{eq:witnesstosolution}
\sum_{i=s_1+i_{j-1}+1}^{s_1+i_j} \alpha_i^{(n_0+pL/Nx_u)^2} P_i(n_0^2)\equiv 0\pmod \pi
\end{equation}
for~$u=s_1+i_{j-1}+1,\ldots,s_1+i_j$.
Write~$\mathbb{F}_q={\mathbb K}[X]/\pi$ for the
residue field.
The relation~\eqref{eq:witnesstosolution}
says that the non-zero vector~$(P_i(n_0^2))_{s_1+i_{j-1}-1\le i\le s_1+i_j}^T$
in~$(\mathbb{F}_q)^{i_j-i_{j-1}}$
is a solution to a homogeneous system of
equations whose determinant~\eqref{eq:det} is non-zero modulo~$\pi$,
which is a contradiction.
It follows that there exists~$n_0$ in the appropriate
residue class modulo~$pL/N$ such that~$Q_j(n_0)$ is non-zero
modulo~$\pi$ for all~$j=1,\ldots,t$.

For each~$j=1,\ldots,t$
and for each~$r$ dividing~$\frac{L}{N}$
we can choose~$\ell_j$ (modulo~$r$) such that
\[
2\ell_j n_0 +pNT\ell_j^2\equiv j\pmod r.
\]
The claimed choice of~$\ell_j$ modulo~$r$
of the above congruences is formally given by
\[
\ell_j\equiv \frac{1}{pNT} (-n_0+{\sqrt{n_0^2+jpNT}})\pmod r,
\]
which exists since
by construction~$r$ does not divide~$pNT$
and~$n_0^2+jpNT$ is a quadratic residue modulo~$r$
and the square root symbol denotes any choice of square root.
By Hensel's lifting
lemma (see~\cite[Sec.~4.3]{MR861410}),
we can extend this
solution~$\ell_j$ defined modulo~$r$
to a solution also written~$\ell_j$
defined modulo~$r^{a_r}$,
and then
by the Chinese Remainder theorem
to a solution again written~$\ell_j$ modulo~$\frac{L}{N}$.
This finally gives a choice of~$\ell_j$ satisfying
\[
2\ell_j n_0+pNT\ell_j^2 \equiv j\pmod {L/N}.
\]
Thus
\[
\beta_u^{2\ell_j n_0+pNT\ell_j^2}= (\alpha_u^{pNT})^{j+\lambda_j L/N}=\alpha_u^{pNTj}\alpha_u^{pTL}
\]
and so
\[
\beta_u^{2\ell_j n_0+pNT\ell_j^2}
\equiv
\alpha_u^{pNTj}
\equiv
\beta_u^{j}\pmod \pi
\]
because~$L$ is a multiple of the order of~$\alpha_u$ modulo~$\pi$, and the above congruences hold for all~$u=1,\ldots,t$.
This means that we can write~\eqref{eq:www1}
as
\[
\sum_{j=1}^t Q_j(n_0) (\beta_j^u-1)\equiv 0\pmod \pi
\]
for all~$u=1,\ldots,t$,
and~${\bf Q}=(Q_j(n_0))_{1\le j\le t}^T$
is not the zero vector in~${\mathbb F}_q^t$.
It follows that
\[
\det\left|\begin{matrix} \beta_1-1 & \beta_2 -1 & \cdots & \beta_t-1\\
\beta_1^2-1 & \beta_2^2-1 & \cdots & \beta_t^2-1\\
\cdots & \cdots & \cdots & \cdots \\
\beta_1^t-1 & \beta_2^t-1 & \cdots & \beta_t^t-1 \\
\end{matrix}
\right|
\]
is divisible by~$\pi$.
Up to a choice of sign, the above determinant is
\[
\prod_{i=1}^t(\beta_i-1) \prod_{1\le i<j\le t} (\beta_i-\beta_j).
\]
Thus either~$\beta_i\equiv 1\pmod \pi$
for some~$i=1,\ldots,t$, or~$\beta_i\equiv \beta_j\pmod \pi$
for some~$1\le i<j\le t$, and neither possibility
is compatible with~\eqref{eq:congruenceconditionforcontradiction}.
This contradiction proves the lemma.
\end{proof}

\section{Acknowledgements}

We are grateful to both Joe Silverman and an anonymous referee for many suggestions that
have improved the exposition, several of which were quite substantial.
One of these -- not implemented here -- is that there may be
an almost entirely geometric argument that gives the main
result, associating the linear recurrence sequence to a
multiplicative group and showing that the relation~\eqref{eqn:eventualequivalence}
then forces the elliptic curve to have impossible reduction
properties. This paper started during a visit of F.~L. to the Max Planck Institute in Fall, 2013 and ended during a conference at CIRM Luminy celebrating the 60th birthday of Professor Igor Shparlinski in April 2016. This author thanks the Max Planck Institute for hospitality and support, Professors Pieter Moree, Francesco Pappalardi and Igor Shparlinski for useful conversations and the organizers of the CIRM event for their invitation.


\providecommand{\bysame}{\leavevmode\hbox to3em{\hrulefill}\thinspace}

\end{document}